\documentclass[a4paper,11pt]{amsart}

\usepackage[english]{babel}
\usepackage{amsmath,amsthm}
\usepackage{amsfonts}
\usepackage{enumerate}
\usepackage{graphicx}
\usepackage{color}
\usepackage[all]{xy}
\usepackage{thmtools, thm-restate}
\usepackage{overpic}
\usepackage{enumitem}
\usepackage{hyperref}

\newcommand{\blue}{\textcolor{blue}}

\usepackage[top=1.25in, bottom=1.25in, left=1.25in, right=1.25in]{geometry}

\newtheorem{thm}{Theorem}[section]
\newtheorem{cor}[thm]{Corollary}
\newtheorem{lem}[thm]{Lemma}
\newtheorem{prop}[thm]{Proposition}

\theoremstyle{definition}

\theoremstyle{remark}
\newtheorem{rem}[thm]{Remark}

\numberwithin{equation}{section}

\newcommand{\spin}{\ifmmode{\rm Spin}\else{${\rm spin}$\ }\fi}
\newcommand{\spinc}{\ifmmode{{\rm Spin}^c}\else{${\rm spin}^c$}\fi}

\newcommand{\Z}{\mathbb{Z}}

\newcommand{\Q}{\mathbb{Q}}

\newcommand{\R}{\mathbb{R}}

\newcommand{\n}{\mathbf n}

\newcommand{\xtup}{\mathbf x}

\DeclareMathOperator{\len}{len}

\begin{document}

\title{Non-simply connected symplectic fillings of lens spaces}%

\author{Paolo Aceto}%
\address {University of Oxford}
\email{paoloaceto@gmail.com }

\author{Duncan McCoy}%
\address {Universit\'{e} du Qu\'{e}bec \`{a} Montr\'{e}al}
\email{mc\_coy.duncan@uqam.ca}

\author{JungHwan Park}%
\address {Georgia Institute of Technology}
\email{junghwan.park@math.gatech.edu}
\date{\today}%

\begin{abstract}
We prove results exploring the relationship between the fundamental group and the second Betti number of minimal symplectic fillings of lens spaces. These results unify and generalize several disparate facts appearing in the literature. The Fibonacci numbers make a cameo appearance.
\end{abstract}
\maketitle

\section{Introduction}
Understanding the symplectic fillings of contact manifolds is a problem with a long history in contact and symplectic geometry. In this article, we focus on symplectic filling of lens spaces. The lens space $L(p,q)$ is the manifold obtained by $-p/q$-surgery on the unknot where $p > q > 0$ are relatively prime integers.

Eliashberg~\cite{Eliashberg:1990-1} proved that every symplectic filling for the standard tight contact structure on $S^3$ is obtained by a blowup of the standard symplectic $B^4$. Moreover, McDuff~\cite{McDuff:1990-1} classified the symplectic fillings for the standard tight contact structures on the lens spaces $L(p, 1)$.

Lisca~\cite{Lisca:2008-1} extended these results by classifing the symplectic fillings of the standard tight contact structures on every lens space. Moreover, recently Etnyre-Roy~\cite{Etnyre-Roy:2020-1} and Christian-Li \cite{Christian-Li:2020-1}  classified the symplectic fillings of all the tight contact structures on lens spaces. A symplectic filling $X$ is said to be minimal if it does not admit any symplectic blow-downs, that is it doesn't contain any embedded symplectic 2-spheres with self-intersection $-1$. Since every tight contact structure on a lens space is planar~\cite{Schonenberger}, every minimal symplectic filling is symplectically deformation equivalent to a Stein filling \cite{Wendl-Niederkruger}. So in this setting the study of Stein fillings and minimal symplectic fillings are effectively equivalent. In this paper, we find several bounds on the topology of minimal symplectic fillings of lens spaces. These results unify several preexisting results in the literature and the general theme is that a large fundamental group implies a small second Betti number.

\begin{thm}\label{thm:main2}
If $X$ is a minimal symplectic filling of $(L(p,q), \xi)$ with $|\pi_1(X)| =d$, then $d^2$ divides $p$ and 
\begin{equation}\label{eq:b2-bound}
b_2(X)\leq \frac{p}{d^2} -1.
\end{equation}
Moreover, if $\xi$ is virtually overtwisted then this inequality is strict.
\end{thm}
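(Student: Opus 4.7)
The plan is to pass to the universal cover $\widetilde X \to X$, a $d$-fold cyclic cover equipped with the pulled-back symplectic form, and translate bounds for simply connected minimal symplectic fillings back to $X$. First I would verify that $\iota_*: \pi_1(\partial X) \to \pi_1(X)$ is surjective. Since every tight contact structure on a lens space is planar (Sch\"onenberger) and every minimal symplectic filling of a planar contact structure is symplectic deformation equivalent to a Stein filling (Wendl--Niederkr\"uger), $X$ admits a handle decomposition with only $0$-, $1$-, and $2$-handles; standard handlebody arguments then show that every generator of $\pi_1(X)$ is realized by a loop in $\partial X$, giving the surjectivity of $\iota_*$. Consequently $\pi_1(X)$ is a cyclic quotient of $\Z/p$, so $\pi_1(X) \cong \Z/d$ with $d \mid p$, and $\partial \widetilde X$ is the $d$-fold cyclic cover $L(p/d, q')$ of $L(p,q)$.

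The divisibility $d^2 \mid p$ is a homological statement: the long exact sequence of $(X, \partial X)$, combined with Poincar\'e--Lefschetz duality and the universal coefficient theorem (and using that $H_2(X)$ is free, a consequence of the $3$-handle-free decomposition), yields $|\det Q_X| = p/d^2$, which must be a positive integer.

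For the main inequality I would invoke the bound $b_2(Y) \leq n - 1$ for any simply connected minimal symplectic filling $Y$ of a lens space $L(n, m)$, extractable from Lisca's classification and its extensions by Etnyre--Roy and Christian--Li. Before applying this I would check that $\widetilde X$ is itself minimal: any exceptional sphere $S$ in $\widetilde X$ would either have trivial stabilizer under the deck action, in which case its translates project to an exceptional sphere in $X$ (contradicting minimality), or have stabilizer of order $2$ acting antipodally (higher orders cannot act freely on $S^2$), producing a symplectic $\R P^2$ in $X$, which is ruled out by orientability. Applied to $\widetilde X$, the bound gives $b_2(\widetilde X) \leq p/d - 1$. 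Using $\chi(X) = 1 + b_2(X)$ by finiteness of $\pi_1(X)$ and $\chi(\widetilde X) = 1 + b_2(\widetilde X)$ by simple connectivity, multiplicativity of Euler characteristic in the $d$-fold cover gives $d(1 + b_2(X)) \leq p/d$, i.e.\ $b_2(X) \leq p/d^2 - 1$.

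For the strict inequality when $\xi$ is virtually overtwisted, the pulled-back contact structure $\widetilde \xi$ on $L(p/d, q')$ is tight (since $\widetilde X$ fills it) but remains virtually overtwisted, since overtwistedness on any further cover would descend to $L(p,q)$. The maximum value $b_2 = n - 1$ for simply connected minimal fillings of $L(n,m)$ is attained only by the plumbing filling of the universally tight structure, so for $\widetilde \xi$ the bound is strict, and this propagates through the Euler characteristic identity to yield the strict version of \eqref{eq:b2-bound}. The main obstacle is the third step: extracting the precise filling bound $b_2 \leq n - 1$ along with its strict version for virtually overtwisted structures from the classification literature in the exact form needed.
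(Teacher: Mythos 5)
Your covering-space strategy is genuinely different from the paper's proof, which never leaves Lisca's explicit fillings: the paper expands $p=\det M(b_1,\dots,b_k)$ by multilinearity over the indices where $b_i>n_i$ and reads off both $d^2\mid p$ and the bound from positivity of the resulting terms. Most of your steps are sound and in places more conceptual: the surjectivity of $\pi_1(\partial X)\to\pi_1(X)$ from the handle structure of a Stein domain, the homological identity $p=d^2\,|\det Q_X|$ (valid once $H_3(X)=0$ and $H_2(X)$ is free, which the $2$-handlebody structure gives), the Euler characteristic bookkeeping $1+b_2(\widetilde X)=d(1+b_2(X))$, and the strictness argument (equality forces the cover to be $L(p/d,p/d-1)$, all of whose tight structures are universally tight by Honda, contradicting virtual overtwistedness, which is inherited by $\widetilde\xi$). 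The ingredient you flag as the main obstacle is in fact unproblematic: $b_2(Y)\leq n-1$ for any minimal filling of $L(n,m)$ follows from $b_2(Y)\leq \len(n/m)\leq m\leq n-1$ (Fossati, Etnyre--Roy), exactly the chain the paper itself uses in its $d=1$ case.

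The genuine gap is your proof that $\widetilde X$ is minimal, and the whole argument hinges on it (a non-minimal filling can have arbitrarily large $b_2$ after interior blow-ups). In the trivial-stabilizer case, the deck translates $gS$ of an exceptional sphere $S$ need not be pairwise disjoint, and the covering projection restricted to $S$ is in general only an immersion with image a possibly singular sphere in $X$; so you cannot conclude that $X$ contains an \emph{embedded} symplectic $(-1)$-sphere, and minimality of $X$ is not contradicted as stated. (In the $\Z/2$-stabilizer case your conclusion is right but the reason given is not: the quotient would be a closed non-orientable surface all of whose tangent planes are symplectic, which is impossible because $\omega$ orients a symplectic surface; orientability of $X$ is irrelevant, as $\R P^2$ embeds in orientable $4$-manifolds.) The fix is available from tools you already invoke: since $\xi$ is planar and $X$ is minimal, $X$ is symplectic deformation equivalent to a Stein (in particular exact) filling; exactness (or the Stein structure) pulls back to the finite cover, and an exact filling contains no closed symplectic surface at all by Stokes' theorem, so $\widetilde X$ is minimal. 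With that substitution your route goes through, but as written the minimality step fails.
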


This extends an observation of Fossati, who showed that $d$ divides $p$ and that every symplectic filling of $L(p,q)$ is simply connected if $p$ is prime \cite[Corollary 14]{Fossati:2019-1} (see also \cite[Theorem~1.16]{Etnyre-Roy:2020-1}). Whereas Theorem~\ref{thm:main2} implies that every symplectic filling is simply connected if $p$ is square-free. Finally, Theorem~\ref{thm:main2} also recovers the fact that a virtually overtwisted contact structure on a lens space cannot be filled with a rational homology ball \cite[Proposition~A.1]{Golla-Starkston}, \cite[Lemma 1.5]{Etnyre-Roy:2020-1} (see also \cite[Theorem~4]{Fossati:2019-1}).

Furthermore the bound \eqref{eq:b2-bound} is sharp and one can characterize those lens spaces attaining equality. 
\begin{prop}\label{prop:main2_optimality}
A lens space $(L(p,q),\xi)$ admits a minimal symplectic filling $X$ with
\[
|\pi_1(X)|= d \quad\text{and}\quad b_2(X) =\frac{p}{d^2} -1
\]
if and only if  $L(p,q)$ is homeomorphic to $L(nd^2,ndc-1)$ where $n=\frac{p}{d^2}$; $c$ is an integer satisfying $\gcd(c,d)=1$ and $1\leq c \leq d$; and $\xi$ is universally tight.
\end{prop}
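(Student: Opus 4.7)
The plan is to treat the two directions of the biconditional separately.

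For the ``if'' direction, given $n, d, c$ with $\gcd(c,d) = 1$ and $1 \le c \le d$, I would give an explicit construction of an extremal minimal Stein filling $X$ of $L(nd^2, ndc-1)$ equipped with its universally tight contact structure. The building block is Lisca's rational homology ball $B_{d,c}$, a Stein filling of $L(d^2, dc-1)$ with $H_1(B_{d,c}) \cong \Z/d$. I would exhibit $X$ as the result of attaching $n-1$ Stein $2$-handles to $\partial B_{d,c}$ along a suitable Legendrian chain; equivalently, $X$ is the Milnor fiber of the partial $P$-resolution of the cyclic quotient singularity $\tfrac{1}{nd^2}(1, ndc-1)$ whose non-smooth component is the single T-singularity $\tfrac{1}{d^2}(1, dc-1)$. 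A Mayer--Vietoris computation then yields $\pi_1(X) \cong \Z/d$ and $b_2(X) = n-1$, and minimality is automatic because an exact symplectic filling cannot contain a closed symplectic surface, in particular no exceptional $(-1)$-sphere.

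For the ``only if'' direction, suppose $X$ is a minimal symplectic filling of $(L(p,q), \xi)$ with $|\pi_1(X)| = d$ and $b_2(X) = p/d^2 - 1$, so equality holds in \eqref{eq:b2-bound}. Theorem~\ref{thm:main2} immediately forces $\xi$ to be universally tight, since otherwise the inequality would be strict. To recognize $L(p,q)$, I would revisit the proof of Theorem~\ref{thm:main2} in the equality case: that argument analyzes $X$ through a plumbing or handle decomposition that distributes homology between a rational-homology-ball piece $B \subseteq X$ carrying all of $\pi_1(X)$ and a collection of additional $2$-handles. Extremality forces $B$ to be as efficient as possible (with $b_2(B) = 0$ and $|H_1(B)| = d$), so by Lisca's classification of rational ball fillings of lens spaces, $B$ must be homeomorphic to $B_{d,c}$ for some admissible $c$. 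Reconstructing $\partial X$ from this Kirby data identifies $L(p,q)$ with $L(nd^2, ndc-1)$ and pins down $c$.

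The main obstacle is this rigidity step. One must show that every extremal minimal filling genuinely admits such a decomposition, with all of $\pi_1(X)$ concentrated in a single ball piece rather than distributed across the handles. This should follow by tracking the equality case carefully through the inductive or continued-fraction mechanism behind Theorem~\ref{thm:main2}, but matching the various negative continued fraction expansions of $p/q$ with the data $(n, d, c)$, and verifying that the linking form on $H_1(L(p,q)) \cong \Z/p$ is compatible with the image of $H_1(\partial B_{d,c})$, will require careful bookkeeping to establish uniqueness of $c$ in the range $1 \le c \le d$.
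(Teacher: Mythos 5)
Your ``only if'' direction contains the genuine gap: the ``rigidity step'' you flag at the end is not a bookkeeping detail but the entire mathematical content of the proof, and your sketch defers it rather than supplies it. Concretely, the argument must start from the classification input (Etnyre--Roy, Christian--Li) that any non-simply connected minimal filling is diffeomorphic to a Lisca filling $W_{p,q}(\n)$ for an admissible tuple $\n$ with $b_i\geq n_i$; a general minimal symplectic filling has no a priori handle decomposition, let alone one containing a rational homology ball carrying all of $\pi_1$. Given that input, the paper's equality analysis is a determinant computation rather than a Mayer--Vietoris or linking-form argument: writing $p=\det M(b_1,\dots,b_k)$ and expanding by multilinearity (Lemma~\ref{lem:det_multilinear}) over the set $S$ of indices with $b_i>n_i$ (which excludes $1$ and $k$ by Lemma~\ref{lem:pi1_calc} since $\pi_1(X)\neq 1$), every term is divisible by $d^2$ because $d=|\pi_1(X)|$ divides both $\det M(n_1,\dots,n_{i-1})$ and $\det M(n_{i+1},\dots,n_k)$ for each $i\in S$ (Corollary~\ref{cor:divide}), and the cross-terms $K_{i,j}$ are strictly positive, so $p\geq d^2\sum_{i\in S}(b_i-n_i)=d^2\bigl(b_2(X)+1\bigr)$ with equality only if $S$ is a single index $j$. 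That single-index conclusion is exactly your assertion that ``all of $\pi_1(X)$ is concentrated in a single ball piece,'' and nothing in your proposal produces it. Without it, your appeal to Lisca's classification of rational-ball fillings is also circular: you would need to know beforehand that the subpiece $B$ exists, has $b_2(B)=0$, and has boundary a lens space of the form $L(d^2,dc-1)$ before that classification can be invoked.

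Once the unique index $j$ with $b_j>n_j$ is established, the identification of the lens space is a direct continued-fraction computation (Lemma~\ref{lem:extremal_example}): setting $d/c=[n_1,\dots,n_{j-1}]^-$ one computes $p/(p-q)=nd^2/(ndc+1)$, hence $L(p,q)\cong L(nd^2,ndc-1)$, and this automatically yields $c$ with $\gcd(c,d)=1$ in the required range; no separate linking-form bookkeeping is needed. (Your use of Theorem~\ref{thm:main2} to get universal tightness is legitimate as stated, though in the paper that ``moreover'' clause is proved by this same equality analysis together with Lemma~\ref{lem:univ_tight}.) Your ``if'' direction is essentially the paper's construction, just phrased through the Milnor-fiber picture: the extremal Lisca filling is your $B_{d,c}$ with $n-1$ further $2$-handles attached along copies of the meridian $\mu_j$, and these are nullhomotopic in $B_{d,c}$ since they represent $d\mu_1$, so $\pi_1\cong\Z/d\Z$ and $b_2=n-1$ do come out as you claim; you should, however, also verify that the contact structure being filled is the universally tight one, which again follows from Lemma~\ref{lem:univ_tight}.
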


Note that setting $n=1$ in Proposition~\ref{prop:main2_optimality} recovers the classification of lens spaces which bound symplectic rational balls.

Moreover, it turns out that a symplectic filling $X$ of $L(p,q)$ with $|\pi_1(X)|= d$ and $b_2(X) =\frac{p}{d^2} -1$, as in Proposition~\ref{prop:main2_optimality}, is unique up to diffeomorphism (see Remark~\ref{rem:uniqueness}) and plays an interesting role in singularity theory. Every $L(p,q)$ arises as the link of $(X_{p,q},0)$ a cyclic quotient singularity (See \cite{Nemethi} for a nice introduction to these matters). It is shown in \cite[Proposition 5.9]{Looijenga-Wahl1986} that $(X_{p,q},0)$ has a smoothing which is a quotient of a Gorenstein smoothing if and only if the pair $(p,q)$ is of the form described in Proposition~\ref{prop:main2_optimality}. In fact, one can verify that fillings under consideration are precisely those arising as the Milnor fibers of smoothings obtained as quotients of Gorenstein smoothings (again, see Remark~\ref{rem:uniqueness}). 

To state our second bound we need to establish some notation. If $p > q > 0$ are relatively prime integers, then the rational number $p/q$ admits a unique Hirzebruch-Jung continued fraction expansion  $$p/q=[a_1,a_2,\dots, a_k]^-$$ where $a_i$ are integers with $a_i \geq 2$. We use $\len(p/q)=k$ to denote the length of this continued fraction. If $X$ is a minimal symplectic filling of $L(p,q)$ then the following inequality holds \cite[Theorem~1]{Fossati:2019-1}, \cite[Theorem~1.4]{Etnyre-Roy:2020-1}:
$$b_2(X)\leq \len(p/q).$$
Moreover, Fossati showed that this inequality is strict if $\pi_1 (X)$ is non-trivial. We show that in fact the gap between $b_2(X)$ and $\len(p/q)$ grows with the size of the fundamental group $\pi_1 (X)$. Let $F_i$ denote the Fibonacci sequence indexed so that $F_1=F_2=1$.

\begin{thm}\label{thm:main}
If $X$ is a minimal symplectic filling of $(L(p,q), \xi)$ with $|\pi_1(X)| \geq F_{\ell+2}$, then
\begin{equation}\label{eq:b2_lengthbound}
b_2(X)\leq \len(p/q) - \ell.
\end{equation}\end{thm}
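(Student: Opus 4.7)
The plan is to argue via the contrapositive. Setting $d=|\pi_1(X)|$, $s=b_2(X)$, and $k=\len(p/q)$, the theorem is equivalent (by monotonicity of the Fibonacci sequence) to the single uniform inequality $d\leq F_{k-s+3}-1$ valid for every minimal symplectic filling $X$ of every lens space. The inputs at hand are Theorem~\ref{thm:main2}, which provides $d^2\mid p$ and $s+1\leq p/d^2$, and the Fossati--Etnyre--Roy bound $s\leq k$, strict when $\pi_1(X)$ is nontrivial. These already supply $d\leq F_3-1=1$ and $d\leq F_4-1=2$, i.e.\ the cases $\ell=0,1$; the substance of the theorem lies in the inductive extension past higher Fibonacci values.

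The main approach is induction on $k-s$, passing to the universal cover $\widetilde X\to X$. Since $\pi_1(X)\cong \Z/d$, the cover $\widetilde X$ is a simply-connected symplectic filling of the $d$-fold cyclic cover of $L(p,q)$, namely $L(p/d,q')$ for an appropriate $q'$, and an Euler characteristic computation gives $b_2(\widetilde X)=d(s+1)-1$. Applying Fossati's bound to the minimal model of $\widetilde X$ (controlling any necessary blow-downs) yields an inequality of the form $d(s+1)-1\leq \len((p/d)/q')+\text{(error)}$, and the inductive step then reduces the theorem to a combinatorial comparison between $\len(p/q)$ and $\len((p/d)/q')$ driven by the divisibility $d\mid p$. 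The Fibonacci numbers should emerge here as the extremal solution, with the Wahl (T-singularity) chains underlying Proposition~\ref{prop:main2_optimality} identified as the configurations attaining equality: these are precisely the continued fractions whose length-to-determinant ratios realize the Fibonacci recursion $F_{\ell+2}=F_{\ell+1}+F_\ell$.

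An alternative, more directly combinatorial route proceeds via Lisca's classification of the minimal symplectic fillings of $(L(p,q),\xi)$. Each such $X$ is encoded by an admissible zero continued fraction $(n_1,\dots,n_s)$ refining the Hirzebruch--Jung expansion of $p/(p-q)$, and $|\pi_1(X)|$ is the absolute value of a determinant computable from this data. The theorem becomes the purely combinatorial assertion that, over all admissible zero continued fractions of length $s$ refining a given Hirzebruch--Jung sequence of length $k$, the order of the associated cyclic quotient is at most $F_{k-s+3}-1$, with equality attained on Wahl chains. This reformulation is attractive because it removes all symplectic topology from the problem, leaving a pure statement about integer sequences to which the Fibonacci recursion can be applied directly by induction on the length.

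The main obstacle in either approach is the explicit Fibonacci lemma identifying the maximum ``group-order versus length'' ratio. Theorem~\ref{thm:main2} controls $d$ through the ratio $p/(s+1)$ and Fossati's inequality controls $s$ through the length $k$, but neither invokes the integer recursion governing the Fibonacci numbers. The technical centerpiece of the proof must therefore be a concrete combinatorial lemma showing that the Fibonacci sequence is the unique extremizer of the appropriate minimization problem on Hirzebruch--Jung continued fractions; I expect it to admit an inductive proof by peeling a Wahl subchain off a given configuration, the Fibonacci increment $F_{r+3}=F_{r+2}+F_{r+1}$ emerging naturally from the splitting of the continued fraction into shorter admissible blocks whose cyclic-group orders multiply.
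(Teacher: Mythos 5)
There is a genuine gap: your write-up is a plan rather than a proof, and the step you defer --- what you yourself call the ``technical centerpiece'' --- is exactly the content of the theorem. The paper's argument (which your second, ``combinatorial'' route gestures at) runs through Lisca's classification and then needs two concrete ingredients that you never supply. First, a structural analysis of the admissible tuple $\n=(n_1,\dots,n_k)$: taking the first index $j$ with $n_j=1$ and setting $\alpha/\beta=[n_1,\dots,n_{j-1}]^-$, one shows that $|\pi_1(W_{p,q}(\n))|$ divides $\alpha$ (it is a gcd of subdeterminants, not ``the absolute value of a determinant''), that $[n_k,\dots,n_{j+1}]^-=\alpha/(m\alpha-\beta)$, and, via a blow-down argument (the paper's Lemma~\ref{lem:U_bound}) together with the identity $U(\alpha/\beta)+U(\alpha/(\alpha-\beta))=V(\alpha/\beta)-1$, that $\len(p/q)-b_2(X)\geq V(\alpha/\beta)$. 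Second, the extremal Fibonacci lemma (Lemma~\ref{lem:fibonacci}): if $V(\alpha/\beta)=L$ then $\alpha\leq F_{L+2}$, proved by an exchange argument on the $S,T$ operators. You only conjecture the second ingredient (``I expect it to admit an inductive proof'') and do not formulate the first at all; indeed your combinatorial reformulation is imprecise (the tuple has length $\len(p/(p-q))$, not $b_2$, and refines $p/(p-q)$, not $p/q$), so the ``purely combinatorial assertion'' you would need is not even correctly stated, let alone proved.

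Your preferred first route (universal covers plus induction on $\len(p/q)-b_2$) does not avoid this. The cover $\widetilde X$ is Stein, hence minimal, and Fossati's bound gives $d(b_2(X)+1)-1\leq \len\bigl((p/d)/q'\bigr)$; but to conclude $b_2(X)\leq \len(p/q)-\ell$ you must compare $\len(p/q)$ with $\len\bigl((p/d)/q'\bigr)$, an inequality you neither state precisely nor prove. On the extremal examples $L(nd^2,ndc-1)$ the covering bound is an equality and the missing comparison reduces exactly to the statement $V(d/c)=L\Rightarrow d\leq F_{L+2}$ --- i.e.\ to the same Fibonacci lemma. So the covering argument relocates the difficulty rather than resolving it (and your parenthetical worry about ``controlling blow-downs'' is moot, since covers of Stein fillings are Stein). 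The first paragraph's contrapositive reformulation $d\leq F_{\len(p/q)-b_2(X)+3}-1$ is correct, but nothing in the proposal establishes it.
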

 In fact, since the Fibonacci numbers are approximately exponential in the golden ratio $\varphi$, there is an explicit upper bound for $b_2(X)$ of the form:
$$b_2(X)  < \len(p/q) -\log_\varphi |\pi_1(X)| +1.$$
We can also exhibit examples showing that the bound \eqref{eq:b2_lengthbound} is sharp.
\begin{prop}\label{prop:main_optimality}
A lens space $(L(p,q),\xi)$ admits a non-simply connected minimal symplectic filling $X$ with
\[
|\pi_1(X)|= F_{\ell+2} \quad\text{and}\quad b_2(X)= \len(p/q) -\ell
\]
if and only if  $L(p,q)$ is homeomorphic to $L(nF_{\ell+2}^2,nF_{\ell}F_{\ell+2}-1)$ for some integer $n\geq 1$, and $\xi$ is universally tight.
\end{prop}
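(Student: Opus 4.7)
The plan is to prove both implications via Proposition~\ref{prop:main2_optimality}, with the Fibonacci structure entering through identities for Hirzebruch--Jung continued fractions.

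For the $(\Leftarrow)$ implication, suppose $L(p,q)\cong L(nF_{\ell+2}^2, nF_\ell F_{\ell+2}-1)$ and set $d=F_{\ell+2}$, $c=F_\ell$. From $F_{\ell+2}=F_{\ell+1}+F_\ell$ and $\gcd(F_\ell,F_{\ell+1})=1$ one has $\gcd(c,d)=1$, and $1\leq c\leq d$ is clear. Proposition~\ref{prop:main2_optimality} then yields a universally tight $\xi$ and a minimal symplectic filling $X$ with $|\pi_1(X)|=F_{\ell+2}$ and $b_2(X)=n-1$. What remains is to verify $\len(p/q)=n+\ell-1$. The plan is to prove the general continued fraction identity
\[
\len\bigl(nd^2/(ndc-1)\bigr)=\len(d/c)+\len(d/(d-c))+n-2,
\]
valid for $n\geq 1$, $d\geq 2$, and coprime $1\leq c\leq d-1$, together with the Fibonacci identity $\len(F_{\ell+2}/F_\ell)+\len(F_{\ell+2}/F_{\ell+1})=\ell+1$. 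Both follow from short inductions based on the Fibonacci recurrence and the Euclidean-algorithm description of HJ continued fractions.

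For the $(\Rightarrow)$ implication, let $X$ be a filling as in the hypothesis and set $d=F_{\ell+2}$. Theorem~\ref{thm:main2} gives $d^2\mid p$; write $p=nd^2$. The hypothesis $b_2(X)=\len(p/q)-\ell$ combined with the bound $b_2(X)\leq n-1$ of Theorem~\ref{thm:main2} yields $\len(p/q)\leq n+\ell-1$. The matching reverse inequality comes from the proof scheme of Theorem~\ref{thm:main}, which should establish the chain $b_2(X)\leq p/d^2-1\leq \len(p/q)-\ell$ whenever $|\pi_1(X)|\geq F_{\ell+2}$; the assumed equality then forces $b_2(X)=n-1$, and Proposition~\ref{prop:main2_optimality} places $L(p,q)$ in the form $L(nd^2, ndc-1)$ with $\xi$ universally tight, for some $c$ coprime to $d$ with $1\leq c\leq d-1$. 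Applying the general CF identity above, $\len(p/q)=n+\ell-1$ translates to $\len(d/c)+\len(d/(d-c))=\ell+1$, and the equality case of the Fibonacci extremality lemma below forces $\{c,d-c\}=\{F_\ell,F_{\ell+1}\}$; either choice yields the claimed form via the isomorphism $L(nd^2, ndc-1)\cong L(nd^2, nd(d-c)-1)$.

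The main obstacle is the Fibonacci extremality lemma: for $d=F_{\ell+2}$, the minimum value of $\len(d/c)+\len(d/(d-c))$ over coprime $c$ with $1\leq c\leq d-1$ equals $\ell+1$, attained precisely at $\{c,d-c\}=\{F_\ell,F_{\ell+1}\}$. This is a quantitative refinement of the classical fact that consecutive Fibonacci numbers realize the slowest descent in the Euclidean algorithm, and I expect to prove it by induction on $\ell$, using a Euclidean reduction step to peel off one partial quotient of each of $d/c$ and $d/(d-c)$ and reduce to a smaller Fibonacci index.
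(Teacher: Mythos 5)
Your backward implication is essentially sound and close in spirit to the paper's: the length computation you propose, $\len\bigl(nd^2/(ndc-1)\bigr)=\len(d/c)+\len(d/(d-c))+n-2$, is true and is exactly what the paper extracts from Lemma~\ref{lem:extremal_example}(3) together with Lemma~\ref{lem:ULidentities}(3) (namely $\len(p/q)-b_2=V(d/c)$), and your ``Fibonacci extremality lemma'' is Lemma~\ref{lem:fibonacci} combined with $\len(d/c)+\len(d/(d-c))=V(d/c)+1$. The gap is in the forward implication. The chain you invoke, $b_2(X)\leq p/d^2-1\leq \len(p/q)-\ell$ for every minimal filling with $|\pi_1(X)|\geq F_{\ell+2}$, is not established by the proof of Theorem~\ref{thm:main}, and its second inequality is false in general: the two upper bounds for $b_2$ in Theorems~\ref{thm:main2} and~\ref{thm:main} are not comparable. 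For a concrete counterexample, take the admissible tuple $\n=(2,2,2,1,4)$ (obtained from $(0)$ by blow-ups, with $[2,2,2,1,4]^-=0$) and $(b_1,\dots,b_5)=(2,3,2,2,4)$, so that $p/(p-q)=[2,3,2,2,4]^-=36/23$, i.e.\ $(p,q)=(36,13)$. The indices with $b_i>n_i$ are $i=2,4$, with $\alpha_1=2$ and $\alpha_3=4$, so Lemma~\ref{lem:pi1_calc} gives $\pi_1\cong\Z/2\Z=\Z/F_3\Z$ (so $\ell=1$), while $p/d^2-1=8$ and $\len(36/13)-\ell=4-1=3$. Thus $p/d^2-1\not\leq\len(p/q)-\ell$, and from the hypotheses $|\pi_1(X)|=F_{\ell+2}$ and $b_2(X)=\len(p/q)-\ell$ you cannot conclude $b_2(X)=n-1$ by this route; ruling out fillings with $b_2(X)<p/d^2-1$ that satisfy your hypotheses is precisely the content of the forward direction, so the argument as written is circular at this point.

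What is missing is the structural analysis of the admissible tuple that the paper performs in the equality case of Theorem~\ref{thm:main}. Writing $\alpha/\beta=[n_1,\dots,n_{j-1}]^-$ for the first index $j$ with $n_j=1$, the assumed equality $\len(p/q)-b_2(X)=V(\alpha/\beta)=\ell$ forces, via the equality clause of Lemma~\ref{lem:U_bound}, that $j$ is the unique index with $n_j=1$; then the monotonicity of the $\alpha_i$ together with Corollary~\ref{cor:divide} forces $j$ to be the unique index with $b_j>n_j$ (any other excess index would give $|\pi_1(X)|<\alpha=F_{\ell+2}$); only after this does Lemma~\ref{lem:extremal_example} (which is where $b_2=n-1$ actually comes from) combine with Lemma~\ref{lem:fibonacci} applied to $\alpha/\beta$ and with Lemma~\ref{lem:univ_tight} to identify the lens space as $L(nF_{\ell+2}^2,nF_\ell F_{\ell+2}-1)$ and the contact structure as universally tight. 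Your appeal to Proposition~\ref{prop:main2_optimality} could replace the last step once uniqueness of the excess index is known, but it cannot replace the tuple analysis itself.
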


Lisca gave an explicit construction which yields all symplectic fillings of the standard contact structures on lens spaces \cite{Lisca:2008-1} and Etnyre-Roy \cite{Etnyre-Roy:2020-1} and Christian-Li \cite{Christian-Li:2020-1} showed that in fact any minimal symplectic filling of a lens space is diffeomorphic to one of the fillings constructed by Lisca. Thus we prove our results by analyzing the topology of Lisca's fillings, which have a somewhat intricate description in terms continued fraction expansions and admissable tuples.

\subsection*{Acknowledgments}
The authors would like to thank John Etnyre and Edoardo Fossati for enlightening correspondence.

\section{Continued fractions}\label{sec:propcont}

Consider Hirzebruch-Jung continued fraction expansions:
\[
[a_1,\dots, a_k]^- := a_1 - \cfrac{1}{a_2 - \cfrac{1}{\ddots - \cfrac{1}{a_k}}}.
\]
If $p>q>0$ are relatively prime integers, then the rational number $p/q$ admits a unique expression as a continued fraction in the form
\[
p/q=[a_1, \dots, a_k]^-,
\]
where the $a_i$ are integers with $a_i \geq 2$. Using this expansion we can define several notions of `size' for rational numbers. We define $\len,U,V:\Q_{>1} \rightarrow \Z$ as follows. We define
\[\len(p/q):=k\]
to be the length of the expansion and we define
\[
U(p/q):=\sum_{i=1}^k (a_i-2)
\quad\text{and}\quad
V(p/q):=\sum_{i=1}^k(a_i -1).
\]
Notice that these satisfy
\[
V(p/q)=\len(p/q)+U(p/q).
\]
We will make use of the following properties of continued fractions. These facts are all well-known and are straightforward to prove by induction. For example, proofs of \eqref{it:frac0} and \eqref{it:frac1} can be found in \cite[Proposition~10.2.2]{Cox-Little-Schenck}. Proofs of \eqref{it:reverse_order} and \eqref{it:frac2} can be obtained similarly.

\begin{lem}\label{lem:cont_frac_properties}
Let $\{a_i\}_{i\geq 1}$ be a sequence of integers. Let $p_0=1$, $q_0=0$, $p_1=a_1$, $q_1=1$ and for $k\geq 2$ define $p_k$ and $q_k$ recursively by
\begin{equation}\label{eq:define_convergents}
p_k=a_k p_{k-1} - p_{k-2} \quad\text{and}\quad q_k=a_k q_{k-1} - q_{k-2}.
\end{equation}
Then these satisfy the following properties:
\begin{enumerate}[font=\upshape]
\item\label{it:frac0} $p_k/q_k=[a_1,\dots, a_k]^-$ whenever $q_k\neq 0$,
\item\label{it:reverse_order} $p_k/p_{k-1}=[a_k,\dots, a_1]^-$ whenever $p_{k-1}\neq 0$,
\item\label{it:frac1} $q_k p_{k-1} - p_k q_{k-1}=1$ for all $k\geq 1$, and
\item\label{it:frac2} $[a_1,\dots, a_k,x]^-=\frac{x p_k -p_{k-1}}{x q_k -q_{k-1}}$ for any $x\in \R$ with $x\neq \frac{q_{k-1}}{q_k}$.\qed
\end{enumerate}
\end{lem}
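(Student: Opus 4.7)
The plan is to prove the four claims by a cascade of inductions on $k$, taking (4) first, then deducing (1) as a special case, then (3) and (2) each by a separate induction. All four arguments rest on the defining recursion \eqref{eq:define_convergents} together with the folding identity $[a_1,\dots,a_k,x]^- = [a_1,\dots,a_{k-1},a_k-\tfrac{1}{x}]^-$, which is immediate from the definition of a Hirzebruch--Jung continued fraction. For (4), the base case $k=1$ is the direct computation $[a_1,x]^- = a_1-\tfrac{1}{x} = (xp_1-p_0)/(xq_1-q_0)$. For the inductive step I would apply the case $k-1$ at the argument $y = a_k - 1/x$ to obtain
\[
[a_1,\dots,a_k,x]^- \;=\; [a_1,\dots,a_{k-1},y]^- \;=\; \frac{y\,p_{k-1}-p_{k-2}}{y\,q_{k-1}-q_{k-2}},
\]
and then clear the $1/x$ from numerator and denominator, invoking the recursion to recognize $a_k p_{k-1}-p_{k-2}=p_k$ and $a_k q_{k-1}-q_{k-2}=q_k$, yielding $\frac{xp_k-p_{k-1}}{xq_k-q_{k-1}}$. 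The hypothesis $x\neq q_{k-1}/q_k$ is precisely what guarantees $y\neq q_{k-2}/q_{k-1}$, so the inductive hypothesis is legitimately applicable.

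Setting $x=a_k$ in the case $k-1$ of (4) immediately yields (1) for $k$, with the trivial base case $k=1$ reducing to $a_1/1=a_1$. Property (3) I would prove by a direct induction: the base case $q_1 p_0 - p_1 q_0 = 1\cdot 1 - a_1\cdot 0 = 1$ is clear, and expanding via the recursion gives
\[
q_k p_{k-1}-p_k q_{k-1} \;=\; (a_k q_{k-1}-q_{k-2})p_{k-1} - (a_k p_{k-1}-p_{k-2})q_{k-1} \;=\; q_{k-1}p_{k-2}-p_{k-1}q_{k-2},
\]
which equals $1$ by the inductive hypothesis. For (2) a parallel induction using the unfolding identity does the job: the base case $p_1/p_0=a_1$ is immediate, and
\[
[a_k,a_{k-1},\dots,a_1]^- \;=\; a_k - \frac{1}{[a_{k-1},\dots,a_1]^-} \;=\; a_k - \frac{p_{k-2}}{p_{k-1}} \;=\; \frac{a_k p_{k-1}-p_{k-2}}{p_{k-1}} \;=\; \frac{p_k}{p_{k-1}},
\]
having invoked the inductive hypothesis $p_{k-1}/p_{k-2}=[a_{k-1},\dots,a_1]^-$.

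There is no serious obstacle here — the authors rightly call these facts well known — and the only point worth flagging is the bookkeeping of non-vanishing conditions ($q_k\neq 0$ in (1), $p_{k-1}\neq 0$ in (2), $x\neq q_{k-1}/q_k$ in (4)) so that every rational expression manipulated above is well defined at each inductive step. These hypotheses propagate correctly through the induction, since the denominators appearing at step $k$ are exactly the quantities assumed to be nonzero in the statement at step $k$.
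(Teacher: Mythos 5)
Your proof is correct and is precisely the routine induction the paper has in mind: the paper gives no proof of this lemma, citing \cite[Proposition~10.2.2]{Cox-Little-Schenck} for \eqref{it:frac0} and \eqref{it:frac1} and noting that \eqref{it:reverse_order} and \eqref{it:frac2} follow by the same kind of inductive argument, which is what you carry out. The only loose end—in \eqref{it:reverse_order} your inductive hypothesis also needs $p_{k-2}\neq 0$, and in \eqref{it:frac2} one needs $x\neq 0$, neither of which is literally implied by the stated hypotheses—is a looseness inherited from the lemma's statement itself, which tacitly assumes all continued fractions appearing are defined, so it does not constitute a gap in your argument.
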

\subsection{The Fibonacci numbers}\label{subsec:fib}
It will be convenient to define the following operations $S,T : \Q_{>1} \rightarrow \Q_{>1}$ on the rationals by setting
\[S(p/q)=\frac{p+q}{q} \quad\text{and}\quad T(p/q)=\frac{2p-q}{p}.\]
These are chosen so that if $a_i$ are integers with $a_i \geq 2$, then
\[
S([a_1, \dots, a_k]^-)=[a_1+1, \dots, a_k]^-
\quad\text{and}\quad
T([a_1, \dots, a_k]^-)=[2,a_1, \dots, a_k]^-.
\]
Since every rational $p/q>1$ has a unique continued fraction expansion with integer coefficients greater than 1, we see that every $p/q>1$ is obtained from $2/1$ by a unique sequence of applications of the $S$ and $T$ operators. Furthermore, note that $p/q>1$ is in the image of the $T$ operator if and only if $p/q<2$ and $p/q$ is in the image of the $S$ operator if and only if $p/q>2$.

Under these definitions we can easily see the following:
\begin{enumerate}[label=(\roman*)]
\item $V(T(p/q))=V(S(p/q))=V(p/q)+1$,
\item $U(S(p/q))=U(p/q)+1 \quad\text{and}\quad U(T(p/q))=U(p/q)$, and
\item $\len(S(p/q))=\len(p/q) \quad\text{and}\quad \len(T(p/q))=\len(p/q)+1.$
\end{enumerate}

So we see that $V(p/q)-1$ is counting the number of applications of $S$ and $T$ required to obtain $p/q$ from $2/1$; that $U(p/q)$ is counting the number of applications of $S$ required to obtain $p/q$ from $2/1$ and that $\len(p/q)-1$ is counting the number of applications of $T$ required to obtain $p/q$ from $2/1$.

\begin{lem}\label{lem:ULidentities}
If $p>q>0$ are relatively prime integers, then we have that
\begin{enumerate}[font=\upshape]
\item\label{it:1ULidentities} $V(p/q)= V(p/(p-q))$,
\item $\len(p/q)=U(p/(p-q))+1$,
\item $\len(p/q)+ \len(p/(p-q)) =V(p/q)+1$, and
\item\label{eq:UUL} $U(p/q)+ U(p/(p-q)) =V(p/q)-1.$
\end{enumerate}
\end{lem}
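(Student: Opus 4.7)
The plan is to exploit a duality at the level of the $S$ and $T$ generators from Subsection~\ref{subsec:fib}. I would define $D : \Q_{>1} \to \Q_{>1}$ by $D(p/q) = p/(p-q)$ and observe that $D$ is an involution fixing $2/1$. The first step is a direct computation verifying the intertwining relations
\[
D \circ S = T \circ D \quad\text{and}\quad D \circ T = S \circ D.
\]
These reduce to a one-line check from the explicit formulas for $S$ and $T$: $D(S(p/q))=(p+q)/p=T(D(p/q))$ and $D(T(p/q))=(2p-q)/(p-q)=S(D(p/q))$.

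Next, since every $p/q \in \Q_{>1}$ is obtained from $2/1$ by a unique word in $S$ and $T$, the intertwining together with $D(2/1)=2/1$ yields the key structural observation: if $p/q$ is built from $2/1$ using a word with $s$ copies of $S$ and $t$ copies of $T$, then $p/(p-q)$ is built from $2/1$ using the word obtained by swapping all $S$'s and $T$'s, hence with $t$ copies of $S$ and $s$ copies of $T$.

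The final step is to apply properties (i)--(iii) of Subsection~\ref{subsec:fib}, which give $U(p/q)=s$, $\len(p/q)=t+1$, $V(p/q)=s+t+1$, together with the analogues for $p/(p-q)$ obtained by swapping $s$ and $t$. Substituting these expressions into each of the four identities reduces the lemma to a trivial arithmetic check. The only substantive step is the intertwining relation, which is itself routine, so I do not anticipate any real obstacle; the conceptual point is simply that passing from $p/q$ to $p/(p-q)$ corresponds to swapping the two generators.
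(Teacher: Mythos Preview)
Your proposal is correct and follows essentially the same approach as the paper: both arguments hinge on the observation that the involution $D(p/q)=p/(p-q)$ fixes $2/1$ and intertwines $S$ with $T$, and then read off the identities from how $U$, $V$, and $\len$ transform under these operators. The paper phrases the intertwining as an inductive step (if $p/q=S(p'/q')$ then $p/(p-q)=T(p'/(p'-q'))$, and symmetrically) rather than as operator identities $D\circ S=T\circ D$, but the content is identical.
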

\begin{proof} We only sketch the proof and the details are left to the reader. We use induction. The base case is $p/q=2/1$ for which the identities are evident. Now suppose that 
\[p/q=S(p'/q')= \frac{p'+q'}{q'}.\]
for some relatively prime integers $p'>q'>0$, then
\[T\left(\frac{p'}{p'-q'}\right) = \frac{p'+q'}{p'}=\frac{p}{p-q}.\]
Similarly (by the symmetry of the situation), if
 \[p/q=T(p'/q') = \frac{2p'-q'}{p'}	\]
for some relatively prime integers $p'>q'>0$, then
 \[
 S\left(\frac{p'}{p'-q'}\right) = \frac{2p'-q'}{p'-q'} = \frac{p}{p-q}.
 \]
This allows us to prove all the identities by observing how $U$, $V$ and $\len$ are changed by applications of the $S$ and $T$ operators.
\end{proof}

Let $F_n$ denote the Fibonacci numbers indexed so that $F_1=F_2=1$.
\begin{lem}\label{lem:fibonacci}
Let $p>q>0$ be relatively prime integers. If $V(p/q)=L$, then
\[p\leq F_{L+2}\]
with equality if and only if $p/q$ takes the form
\[
p/q=F_{L+2}/F_L \quad \text{or} \quad p/q=F_{L+2}/F_{L+1}.
\]
\end{lem}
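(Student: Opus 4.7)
The plan is to argue by strong induction on $L = V(p/q)$, using the decomposition of rationals greater than $1$ under the $S$ and $T$ operators established in Section~\ref{subsec:fib}. The base case $L=1$ is immediate: the only rational with $V = 1$ is $p/q = 2/1$, and $p = 2 = F_3$ with $F_3/F_1 = F_3/F_2 = 2/1$, so both the inequality and the (degenerate) equality case hold.

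For the inductive step, the naive hypothesis ``$p\leq F_{L+2}$'' alone is not strong enough to propagate through the $T$ operator: since $T(p'/q') = (2p'-q')/p'$ gives $p = 2p' - q'$, which can be close to $2p'$ when $q'$ is small, the bound $p' \leq F_{L+1}$ by itself does not force $p \leq F_{L+2}$. I will therefore strengthen the inductive hypothesis to assert the three bounds
\[
p\leq F_{L+2},\qquad q\leq F_{L+1},\qquad p-q\leq F_{L+1};
\]
the first and third are naturally linked by the symmetry $V(p/q) = V(p/(p-q))$ from Lemma~\ref{lem:ULidentities}.

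With this strengthening the induction runs smoothly. Every $p/q$ with $V(p/q) = L$ has the form $S(p'/q')$ when $p/q > 2$ or $T(p'/q')$ when $p/q<2$, with $V(p'/q') = L-1$ in both cases. In the $S$ case the triple $(p,q,p-q)$ equals $(p'+q',\,q',\,p')$, and the three inductive bounds together with $F_{L+2}=F_{L+1}+F_L$ immediately give the corresponding three bounds at level $L$. The $T$ case is analogous via $(p,q,p-q) = (p' + (p'-q'),\,p',\,p'-q')$, where it is crucial that the inductive bound on $p'-q'$ is available.

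For the equality characterization, assume $p = F_{L+2}$. In the $S$ branch, $p = p' + q' \leq F_{L+1} + F_L$ is tight only if $p' = F_{L+1}$ and $q' = F_L$; by the inductive equality case this forces $p'/q' = F_{L+1}/F_L$, so $p/q = F_{L+2}/F_L$. In the $T$ branch one similarly deduces $p' = F_{L+1}$ and $p' - q' = F_L$, whence $p'/q' = F_{L+1}/F_{L-1}$ and $p/q = F_{L+2}/F_{L+1}$, using the identity $2F_{L+1} - F_{L-1} = F_{L+1} + F_L$. The main obstacle is really just pinpointing the correct strengthening of the inductive hypothesis; once it is in place, everything reduces to bookkeeping with the Fibonacci recurrence.
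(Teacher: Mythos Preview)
Your proof is correct and takes a genuinely different route from the paper. The paper argues by extremality: assuming $p$ is maximal for a given $L$, it shows that replacing any inner subfraction $r/s$ by $r/(r-s)$ (which preserves $V$ by Lemma~\ref{lem:ULidentities}) cannot increase the numerator, and this forces the word in $S$ and $T$ building $p/q$ from $2/1$ to be strictly alternating, yielding the four continued fractions $[3,\dots,3]^-$, $[2,3,\dots,3,2]^-$, $[3,\dots,3,2]^-$, $[2,3,\dots,3]^-$ and hence the Fibonacci ratios. Your approach instead loads the induction with the triple of bounds $p\leq F_{L+2}$, $q\leq F_{L+1}$, $p-q\leq F_{L+1}$ and pushes all three through a single application of $S$ or $T$; this is cleaner bookkeeping and avoids the paper's comparison argument entirely, while the paper's method has the advantage of explaining structurally \emph{why} the extremal fractions come from alternating words. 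One small point you leave implicit is the ``if'' direction of the equality clause, namely that $F_{L+2}/F_L$ and $F_{L+2}/F_{L+1}$ actually satisfy $V=L$; this follows immediately from your same induction (or from the explicit continued fractions above) and is worth a sentence.
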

\begin{proof}
Suppose that we have some $p/q>1$ which maximizes $p$ for a given $L$. Suppose that we can write this as $p/q =G\circ T (r/s)$ where $G$ is some composition of the $S$ and $T$ operators and $r>s>0$ are relatively prime integers. Thus we see that $p/q$ can be written in the form
\[
p/q=[b_1, \dots, b_k, r/s]^-
\]
for some integers $b_i$ with $b_i \geq 2$. Set
\[
p'/q'=\left[b_1, \dots, b_k, \frac{r}{r-s}\right]^-,
\]
where $p'>q'>0$ are relatively prime integers. By Lemma~\ref{lem:cont_frac_properties} \eqref{it:frac2} there are integers $n$ and $m$, such that $p=nr-ms$ and $p'=nr - m(r-s)$. The maximality of $p$ we have that $p'\leq p$ (note that by Lemma~\ref{lem:ULidentities} \eqref{it:1ULidentities} we have that $r/s$ and $r/(r-s)$ have the same $V$ value). This implies that $r/s\geq 2$, which implies that either $r/s=2/1$ or $r/s$ is in the image of the $S$ operator.

Likewise suppose that 
$p/q =G\circ S (r/s)$ where $G$ is some composition of the $S$ and $T$ operators.
Thus we see that $p/q$ can be written in the form
\[
p/q=\left[b_1, \dots, b_k, t + \frac{r}{s}\right]^-
\]
for some integers $b_i$ and $t$ with $b_i\geq 2$ and $t\geq 1$. Set
\[
p'/q'=\left[b_1, \dots, b_k, t+ \frac{r}{r-s}\right]^-.
\]
Similarly, there are integers $n>m$ such that 
\[p=n(r+ts)-ms \quad\text{and}\quad p'=n(r+t(r-s)) - m(r-s).\]
The maximality of $p$ implies that $p'\leq p$. This implies that $r/s\leq 2$, which implies that either $r/s=2/1$ or $r/s$ is in the image of the $T$ operator. Thus we have shown that if $p$ is maximal then $p/q= G (2/1)$ where $G$ is an alternating product of the $S$ and $T$ operators. This implies that a $p/q$ with $p$ maximal takes one of the forms
\[
p/q=(ST)^n(2/1) \quad \text{or}\quad p/q=(TS)^n(2/1)
\]
if $L=2n+1$ is odd or 
\[
p/q=T(ST)^n(2/1) \quad \text{or}\quad p/q=S(TS)^n(2/1)
\]
if $L=2n+2$ is even. In terms of continued fractions these give
\[
p/q=[3,\dots, 3]^- \quad \text{or}\quad p/q=[2,3,\dots, 3,2]^-
\]
or 
\[
p/q=[3,\dots, 3,2]^- \quad \text{or}\quad p/q=[2,3,\dots, 3]^-
\]
depending on whether $L$ is odd or even. It easy to check that these are continued fraction expansions for
\[
p/q=F_{L+2}/F_L \quad \text{and} \quad p/q=F_{L+2}/F_{L+1},
\]
as required.
\end{proof}
\subsection{Matrix identities}
Given a tuple of real numbers $\xtup=(x_1, \dots, x_k)$ we will use $M(\xtup)$ to denote the matrix
\[
M(\xtup)= \begin{pmatrix}
    x_1 & -1    & 0 & 0 \\
    -1 & x_2 & -1 & 0 \\ 
 0  &-1  & \ddots & -1  \\
 0  & 0 & -1    &x_k    
\end{pmatrix}.
\]
If one attempts to calculate the determinant of $M(\xtup)$ by expanding the first row or column one quickly arises at the following recursive formula:
\begin{equation}\label{eq:det_recursion}
\det(M(x_1, \dots, x_k))=x_1 \det (M(x_2, \dots, x_k))- \det (M(x_3,\dots,x_k)),
\end{equation}
valid when $k\geq 3$. By comparing \eqref{eq:det_recursion} to \eqref{eq:define_convergents} in Lemma~\ref{lem:cont_frac_properties} one can easily prove the following lemma by induction. We will leave the proof to the reader.

\begin{lem}\label{lem:cont_frac_matrices}
Let $\{a_i\}_{i\geq 1}$ be a sequence of positive integers. If $k \geq 2$, then the convergents of the continued fraction
\[
p_k/q_k=[a_1, \dots, a_k]^-
\]
where $p_k$ and $q_k$ are positive relatively prime integers, can be computed as 
\[\pushQED{\qed}
p_k=\det M(a_1, \dots, a_k)
\quad\text{and}\quad
q_k=\det M(a_2, \dots, a_k).
\qedhere \]
\end{lem}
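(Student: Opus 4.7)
The plan is to prove Lemma~\ref{lem:cont_frac_matrices} by induction on $k$, matching the recursion for the determinants of the tridiagonal matrices $M(a_1, \dots, a_k)$ with the recursion \eqref{eq:define_convergents} defining the convergents.

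The base case $k=2$ can be checked directly: $\det M(a_1,a_2) = a_1a_2 - 1 = a_2 p_1 - p_0 = p_2$, and $\det M(a_2) = a_2 = a_2 q_1 - q_0 = q_2$. For the inductive step, the recursion \eqref{eq:det_recursion} as written expands along the first row, but the convergent recursion \eqref{eq:define_convergents} proceeds from the right. The key observation is that $M(\xtup)$ is symmetric, so expanding along the last row (or, equivalently, transposing and expanding along the first row) yields the mirror identity
\[
\det M(a_1,\dots,a_k) = a_k \det M(a_1,\dots,a_{k-1}) - \det M(a_1,\dots,a_{k-2}),
\]
valid for $k \geq 3$, and similarly
\[
\det M(a_2,\dots,a_k) = a_k \det M(a_2,\dots,a_{k-1}) - \det M(a_2,\dots,a_{k-2}).
\]

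Assuming inductively that $p_j = \det M(a_1,\dots,a_j)$ and $q_j = \det M(a_2,\dots,a_j)$ for all $j < k$, plugging into the recursions above and comparing with \eqref{eq:define_convergents} gives $\det M(a_1,\dots,a_k) = a_k p_{k-1} - p_{k-2} = p_k$ and $\det M(a_2,\dots,a_k) = a_k q_{k-1} - q_{k-2} = q_k$. Combined with Lemma~\ref{lem:cont_frac_properties}\eqref{it:frac0}, this gives the desired identification of $p_k/q_k$ with $[a_1,\dots,a_k]^-$.

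The only non-routine point is recognizing that the determinant recursion that matches \eqref{eq:define_convergents} is the one obtained by expanding along the last row; this follows instantly from the symmetry of $M(\xtup)$, so there is no real obstacle. (One could alternatively set up a slightly different induction using \eqref{eq:det_recursion} directly together with property \eqref{it:reverse_order} of Lemma~\ref{lem:cont_frac_properties} to handle $p_k$, but the symmetric expansion is cleaner and handles both $p_k$ and $q_k$ uniformly.)
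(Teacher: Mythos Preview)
Your proof is correct and follows the same approach the paper sketches: induction on $k$, matching the determinant recursion for $M(\cdot)$ against the convergent recursion \eqref{eq:define_convergents}. The paper leaves the details to the reader; your observation that one should expand along the last row (using the symmetry of $M(\xtup)$) rather than use \eqref{eq:det_recursion} verbatim is exactly the small point needed to make the induction line up cleanly, and your base case and inductive step are both fine (with the standard convention that the determinant of the empty matrix is $1$, needed when invoking the hypothesis at $j=1$ for $q_1$).
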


The following identity, which is an easy application of the multilinearity of the determinant, will also be useful.
\begin{lem}\label{lem:det_multilinear} Let $\{a_i\}_{i\geq 1}$ be a sequence of positive integers. If $1 < i < k$, then
\begin{align*}\pushQED{\qed}
 \det M(a_1, \dots, a_{i-1}, a_i +m, a_{i+1}, \dots, a_k)&=  
 \det M(a_1, \dots, a_k)\\ &+m \det M(a_1, \dots, a_{i-1})\det M(a_{i+1}, \dots, a_k).\qedhere
\end{align*}
\end{lem}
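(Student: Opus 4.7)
The plan is to exploit multilinearity of the determinant in the $i$-th row of $M(a_1,\dots,a_{i-1},a_i+m,a_{i+1},\dots,a_k)$. Since $M(\xtup)$ is tridiagonal, changing $a_i$ to $a_i+m$ only alters the single diagonal entry in position $(i,i)$; the off-diagonal entries $-1$ in the $i$-th row (at positions $(i,i-1)$ and $(i,i+1)$) are untouched. Thus the $i$-th row decomposes as a sum of the original $i$-th row of $M(a_1,\dots,a_k)$ and the row $(0,\dots,0,m,0,\dots,0)$ with $m$ in the $i$-th slot.

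By multilinearity, the determinant splits accordingly: the first summand reproduces $\det M(a_1,\dots,a_k)$, while the second summand is the determinant of a matrix identical to $M(a_1,\dots,a_k)$ except that its $i$-th row is $(0,\dots,0,m,0,\dots,0)$.

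For the second summand I would simply Laplace-expand along this sparse $i$-th row. Only the $(i,i)$ entry contributes, yielding $m$ times the $(i,i)$-minor of the modified matrix. Because the $i$-th column of the ambient matrix only has nonzero entries in rows $i-1$, $i$, and $i+1$, and deleting both row $i$ and column $i$ severs the tridiagonal matrix into two independent blocks, the minor becomes block-diagonal with blocks exactly $M(a_1,\dots,a_{i-1})$ and $M(a_{i+1},\dots,a_k)$. Hence it equals $\det M(a_1,\dots,a_{i-1}) \cdot \det M(a_{i+1},\dots,a_k)$, which produces the second term of the claimed identity.

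There is no real obstacle: the only point requiring a sentence of justification is that the hypothesis $1<i<k$ ensures both flanking blocks $M(a_1,\dots,a_{i-1})$ and $M(a_{i+1},\dots,a_k)$ are nonempty and well-defined, so the factorization of the minor is legitimate. Together these two steps assemble into the stated identity.
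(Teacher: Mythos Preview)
Your argument is correct and is exactly what the paper has in mind: the lemma is stated without proof, with the remark that it is ``an easy application of the multilinearity of the determinant,'' which is precisely the row-splitting followed by Laplace expansion that you carry out.
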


\subsection{Admissable tuples}
Let $\n=(n_1, \dots, n_k)$ be a tuple of non-negative integers. For each $1\leq i \leq k$, let
\[
\alpha_i/ \beta_i= [n_1, \dots, n_i]^-.
\]
We say that $\n$ is an \emph{admissable tuple} if the matrix $M(\n)$ is positive semi-definite with rank at least $k-1$.


It follows that $\n$ is admissable then for $1\leq i \leq j \leq k$ we have
\begin{equation*}\label{eq:det_positivity}
\det (M(n_i, \dots, n_j))\geq 0
\end{equation*}
with equality only if $i=1$ and $j=k$.

Let $\n=(n_1, \dots, n_k)$ be a tuple with $n_j=1$ for some $1 \leq j \leq k$. We define an operation called \emph{blow down} of $\n$ at $j$ as follows:
\[
\n' =
\begin{cases} 
(n_1, \dots, n_{j-1}-1, n_{j+1}-1, \dots, n_k) &\text{if $1<j<k$,}\\
(n_1, \dots, n_{k-2}, n_{k-1}-1) &\text{if $j=k$,}\\
(n_2-1, n_3, \dots, n_{k}) &\text{if $j=1$.}\\
\end{cases}
\]
It readily follows from the definition of admissibility that $\n$ is admissable if and only if $\n'$ is admissable.\footnote{The point is that one can perform a change of basis that converts the matrix $M(\n)$ into $\begin{pmatrix} 
M(\n') & 0\\
0& 1
\end{pmatrix}$.}

\begin{lem}\label{lem:U_bound}
Suppose that $\n=(n_1, \dots, n_k)$ is an admissable tuple with
\[
\alpha/ \beta =[n_1, \dots, n_k]^-,
\]
where $\alpha$ and $\beta$ are positive relatively prime integers. Then
\begin{equation}\label{eq:U_bound}
\sum_{i=1}^k (n_i -2)\geq U(\alpha/\beta^*)
\end{equation}
where $1\leq \beta^*<\alpha$ and $\beta^* \equiv \beta	\bmod{\alpha}$. Moreover, we have equality only if $n_i\geq 2$ for all $1< i<k$.
\end{lem}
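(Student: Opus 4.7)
The plan is to induct on $k$, using the blow down operation to reduce to the case where every $n_i \geq 2$. The essential preliminary observation is that blow down preserves both $\alpha = \det M(\n)$ and the residue class of $\beta$ modulo $\alpha$, and hence preserves $\beta^*$ and the quantity $U(\alpha/\beta^*)$. For an interior blow down at $j$ with $1 < j < k$, the continued-fraction identity $[\dots, a, 1, b, \dots]^- = [\dots, a-1, b-1, \dots]^-$ immediately gives $[\n']^- = [\n]^-$, so $\alpha$ and $\beta$ are individually preserved. For an endpoint blow down, say at $j=1$, Lemma~\ref{lem:cont_frac_matrices} identifies $\alpha$, $\beta$, and $\beta'$ with determinants of tridiagonal minors; using $n_1 = 1$ together with Lemma~\ref{lem:det_multilinear} and the recursion \eqref{eq:det_recursion}, one checks $\alpha' = \alpha$ and $\beta' = \beta - \alpha$, so in particular $\beta' \equiv \beta \pmod{\alpha}$. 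Admissibility itself is preserved by the change-of-basis argument indicated in the footnote.

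Next, I would compute the effect of blow down on $\sum(n_i - 2)$ by direct term-by-term comparison. An interior blow down decreases this sum by exactly $1$ (the $-1$ contribution of the removed entry $n_j = 1$ is outweighed by two $-1$ adjustments to its neighbors), while an endpoint blow down leaves the sum unchanged (the $-1$ from the removed entry exactly cancels the single $-1$ adjustment to the surviving neighbor).

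With these two facts in hand, the induction is routine. The base case is when all $n_i \geq 2$: then $\n$ is the unique standard Hirzebruch-Jung continued fraction expansion of $\alpha/\beta$ with $\alpha > \beta > 0$, so $\beta^* = \beta$ and $\sum (n_i - 2) = U(\alpha/\beta) = U(\alpha/\beta^*)$, with equality. For the inductive step, if some $n_j = 1$, blow down at $j$ to obtain an admissable tuple $\n'$ of length $k-1$ associated to the same value $U(\alpha/\beta^*)$. Applying the inductive hypothesis to $\n'$ and combining with the behavior of the sum yields $\sum (n_i - 2) \geq U(\alpha/\beta^*)$ in the endpoint case, and the strict bound $\sum (n_i - 2) \geq U(\alpha/\beta^*) + 1$ in the interior case. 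The moreover clause is then immediate: if equality holds but some $n_{j_0} = 1$ with $1 < j_0 < k$, blowing down specifically at $j_0$ would force strict inequality, a contradiction.

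The main obstacle is the determinantal verification that endpoint blow down preserves $\alpha$ and the residue of $\beta$ modulo $\alpha$; without this one cannot identify $U(\alpha/\beta^*)$ across the blow down and the induction fails to close. The interior case is transparent from the continued-fraction identity, but the endpoint case requires invoking Lemmas~\ref{lem:cont_frac_matrices} and \ref{lem:det_multilinear} in conjunction.
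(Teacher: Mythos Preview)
Your proposal is correct and follows essentially the same approach as the paper: both arguments reduce to the all-$n_i\geq 2$ case by repeated blow down, track that interior blow downs decrease $\sum(n_i-2)$ by one while endpoint blow downs leave it unchanged, and observe that blow down preserves $\alpha$ and the residue of $\beta$ modulo $\alpha$. The paper phrases this as iterating blow downs until termination rather than as a formal induction on $k$, and simply states the effect of blow down on the continued-fraction value rather than deriving it via the determinant lemmas, but these are cosmetic differences.
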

\begin{proof}
Suppose that we $n_j=1$ for some $j$ and let
\[\n'=(n_1', \dots, n_{k-1}')\]
be the admissable tuple obtained by blowing down $\n$ at $j$. 
\begin{equation*}
[n_1', \dots, n_{k-1}']^- = \begin{cases}
\frac{\alpha}{\beta} &\text{if $j>1$}\\
\frac{\alpha}{\beta-\alpha} &\text{if $j=1$}
\end{cases}.
\end{equation*}

However we find that
\begin{equation}\label{eq:blowdown_U}
\sum_{i=1}^{k-1} (n_i'-2) =
\begin{cases}
-1+\sum_{i=1}^{k} (n_i-2) & \text{if $j<k$,}\\
\sum_{i=1}^{k} (n_i-2)  &\text{if $j=1$ or $j=k$.}
\end{cases}
\end{equation}
That is blow-downs cannot increase the sum in the left hand side of \eqref{eq:U_bound}.
Since the operation of blowing down decreases the length of the tuple it cannot be repeated indefinitely. Thus after some finite number of blowdowns the tuple $\n$ will be converted to the tuple
$(c_1, \dots, c_{k'})$ where $c_i\geq 2$ for all $i$ and 
\[\alpha/\beta^* =[c_1, \dots, c_{k'}]^-.\] 
We have
\[
\sum_{i=1}^{k} (n_i-2) \geq \sum_{i=1}^{k'} (c_i-2) = U(\alpha/\beta^*).
\]
The statement about equality comes from observing that if $n_j=1$ for some $1<j<k$, then \eqref{eq:blowdown_U} shows that blowing down strictly decreases the quantity $\sum_{i=1}^{k} (n_i-2)$.
\end{proof}
The following facts concerning admissable tuples will also be useful.
\begin{lem}\label{lem:admissable_facts}
Let $\n=(n_1, \dots, n_k)$ be an admissable tuple such that
\[
[n_1, \dots, n_k]^-=0
\]
and for each $1\leq i \leq k$ let
\[
\alpha_i/\beta_i = [n_1, \dots, n_i]^-,
\]
where $\alpha_i$ and $\beta_i$ are positive relatively prime integers. Then we have the following:
\begin{enumerate}[font=\upshape]
\item\label{admissable1} $[n_i, \dots, n_k]^-=\alpha_{i-2}/\alpha_{i-1}$ for any $1<i \leq k$,
\item\label{admissable2} $\det M(n_1, \dots, n_{i-1})=\det M(n_{i+1}, \dots, n_k)$ for any $1<i<k$, and
\item\label{admissable3}
if $n_i=1$ for some $1<i<k$, then
\[
[n_k, \dots, n_{i+1} ]^- = \frac{\alpha_{i-1}}{m\alpha_{i-1}-\beta_{i-1}}.
\]
for some integer $m\geq 1$.
\end{enumerate}
\end{lem}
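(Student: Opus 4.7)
The plan is to combine the transformation formula Lemma~\ref{lem:cont_frac_properties}\eqref{it:frac2} and the matrix identities in Section~\ref{sec:propcont} with the hypothesis $[n_1,\ldots,n_k]^-=0$. A positivity input that will be used throughout is the following consequence of admissability: the kernel of $M(\n)$ is one-dimensional, and by the recurrence \eqref{eq:define_convergents} together with $\alpha_k=0$ it is spanned by $(\alpha_0,\alpha_1,\ldots,\alpha_{k-1})$. An elementary argument using positive semi-definiteness (extend any hypothetical zero eigenvector of a principal submatrix by zeros and compare to the unique null direction of $M(\n)$) then shows that all $\alpha_j$ with $0\leq j \leq k-1$ are strictly positive and that every proper consecutive principal submatrix $M(n_a,\ldots,n_b)$, $(a,b)\neq(1,k)$, has strictly positive determinant.

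For \eqref{admissable1}, set $x := [n_i,\ldots,n_k]^-$ and apply Lemma~\ref{lem:cont_frac_properties}\eqref{it:frac2} with prefix $(n_1,\ldots,n_{i-1})$:
\[
0 = [n_1,\ldots,n_{i-1},x]^- = \frac{x\alpha_{i-1}-\alpha_{i-2}}{x\beta_{i-1}-\beta_{i-2}}.
\]
Since $\alpha_{i-1}>0$, the numerator must vanish, giving $x = \alpha_{i-2}/\alpha_{i-1}$ as claimed.

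For \eqref{admissable2}, part \eqref{admissable1} expresses $\alpha_{i-1}/\alpha_i = [n_{i+1},\ldots,n_k]^-$ in lowest terms (coprimality comes from Lemma~\ref{lem:cont_frac_properties}\eqref{it:frac1} for $(n_1,\ldots,n_k)$, which yields $\gcd(\alpha_{i-1},\alpha_i)=1$). A second lowest-terms expression for the same continued fraction is provided by Lemma~\ref{lem:cont_frac_matrices} applied to $(n_{i+1},\ldots,n_k)$, namely $\det M(n_{i+1},\ldots,n_k)/\det M(n_{i+2},\ldots,n_k)$; both are positive integers by the submatrix positivity from the opening paragraph. Matching numerators forces $\det M(n_{i+1},\ldots,n_k)=\alpha_{i-1}=\det M(n_1,\ldots,n_{i-1})$.

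For \eqref{admissable3}, combining Lemma~\ref{lem:cont_frac_properties}\eqref{it:reverse_order} applied to $(n_{i+1},\ldots,n_k)$, Lemma~\ref{lem:cont_frac_matrices}, and part \eqref{admissable2} gives
\[
[n_k,\ldots,n_{i+1}]^- = \frac{\alpha_{i-1}}{B}, \qquad B:=\det M(n_{i+1},\ldots,n_{k-1}).
\]
To identify $B$ modulo $\alpha_{i-1}$, I apply Lemma~\ref{lem:cont_frac_properties}\eqref{it:frac1} twice: to $(n_1,\ldots,n_k)$, yielding $\beta_i\alpha_{i-1}-\alpha_i\beta_{i-1}=1$, and to $(n_{i+1},\ldots,n_k)$, yielding $\alpha_i B - \alpha_{i-1}C = 1$ with $C:=\det M(n_{i+2},\ldots,n_{k-1})$. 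Equating these produces $\alpha_i(B+\beta_{i-1})=\alpha_{i-1}(C+\beta_i)$, and $\gcd(\alpha_{i-1},\alpha_i)=1$ forces $B+\beta_{i-1}=m\alpha_{i-1}$ for some integer $m$. The bounds $B,\beta_{i-1}\geq 1$ and $\alpha_{i-1}\geq 1$ then force $m\geq 1$. I expect the main obstacle to be the positivity setup in the opening paragraph — ruling out vanishing of the $\alpha_j$ and of proper consecutive submatrix determinants — since once that is in place, each part reduces to comparing two coprime representations of the same continued fraction and a single application of Bézout-type coprimality.
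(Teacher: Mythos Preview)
Your argument is correct. Parts \eqref{admissable1} and \eqref{admissable2} follow the paper's proof essentially verbatim, the only difference being that you spell out the positivity of proper consecutive minors (which the paper simply asserts immediately after the definition of admissability); your sketch there is fine once one notes that a null vector of the tridiagonal matrix $M(\n)$ vanishing at an endpoint must vanish identically by propagating the recurrence.

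For part \eqref{admissable3} you take a genuinely different route. The paper uses the hypothesis $n_i=1$ to compute $[n_{i+1},\dots,n_k]^- = \alpha_{i-1}/(\alpha_{i-1}-\alpha_{i-2})$ explicitly, then reverses the continued fraction and identifies the denominator modulo $\alpha_{i-1}$ via the single congruence $\alpha_{i-2}\beta_{i-1}\equiv 1 \pmod{\alpha_{i-1}}$. You instead pair two instances of Lemma~\ref{lem:cont_frac_properties}\eqref{it:frac1}, one for $(n_1,\dots,n_k)$ and one for the tail $(n_{i+1},\dots,n_k)$, and solve for $B\bmod \alpha_{i-1}$ directly; the hypothesis $n_i=1$ is never invoked. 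This is a mild but real strengthening: your argument shows the conclusion of \eqref{admissable3} holds for every $1<i<k$, not just those with $n_i=1$. The paper's approach is a touch more concrete (it pins down the tail continued fraction exactly before reversing), while yours is cleaner and more uniform.
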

\begin{proof}
For~\eqref{admissable1}, suppose that $r/s=[n_i, \dots, n_k]^-$, then by Lemma~\ref{lem:cont_frac_properties} \eqref{it:frac2} we have that
\[
0=[n_1, \dots, n_{i-1}, r/s]^-=\frac{ \alpha_{i-1}(r/s)-\alpha_{i-2}}{\beta_{i-1}(r/s)-\beta_{i-2}}.
\]
This implies that $r/s = \alpha_{i-2}/\alpha_{i-1}$ as required.

For~\eqref{admissable2}, we observe that Lemma~\ref{lem:cont_frac_matrices} combined with \eqref{admissable1} implies that $\alpha_{i-1}$ is computed as a determinant in two ways:
\[
 \det M(n_1, \dots, n_{i-1})=\det M(n_{i+1}, \dots, n_k)=\alpha_{i-1}.
  \]

For~\eqref{admissable3}, suppose that $n_i=1$. This implies that
\[
 \alpha_{i-2}/\alpha_{i-1}= [1,n_{i+1}, \dots, n_k]^- = 1-\frac{1}{[n_{i+1}, \dots, n_{k} ]^-},
\]
and hence that
\[
[n_{i+1}, \dots, n_{k} ]^-=\frac{\alpha_{i-1}}{\alpha_{i-1}-\alpha_{i-2}}.
\]
Now suppose that
\[
[n_{k}, \dots, n_{i+1} ]^-=c/d,
\]
where $c,d>0$ are relatively prime integers. By Lemma~\ref{lem:cont_frac_properties}~\eqref{it:reverse_order} we have that $c=\alpha_{i-1}$. Furthermore, by applying Lemma~\ref{lem:cont_frac_properties}~\eqref{it:frac1} we see that $d$ satisfies
\[
(\alpha_{i-1}-\alpha_{i-2})d \equiv 1 \bmod{\alpha_{i-1}}.
\]
This implies that $d\equiv -\beta_{i-1}\bmod{\alpha_{i-1}}$, since Lemma~\ref{lem:cont_frac_properties}~\eqref{it:frac1} shows that $\beta_{i-1}\alpha_{i-2}\equiv 1 \bmod{\alpha_{i-1}}$.\end{proof}

\section{Symplectic fillings}\label{sec:symplectic}
 
We first recall Lisca's~\cite{Lisca:2008-1} construction of symplectic fillings for lens spaces. Given a lens space $L(p,q)$, consider continued fraction expansion
\[
\frac{p}{p-q}=[b_1, \dots, b_k]^-,
\]
where $b_i$ are integers with $b_i \geq 2$. Let $\n=(n_1, \dots, n_k)$ be an admissible tuple with 
\begin{equation}\label{eq:admissable}
[n_1,\dots, n_k]^-=0 \quad\text{and}\quad b_i\geq n_i \text{ for $i=1, \ldots, k$}.
\end{equation}
Note that this condition implies that  the following surgery diagram describes $S^1 \times S^2$.

\begin{figure}[h]
\includegraphics[width=.9\textwidth]{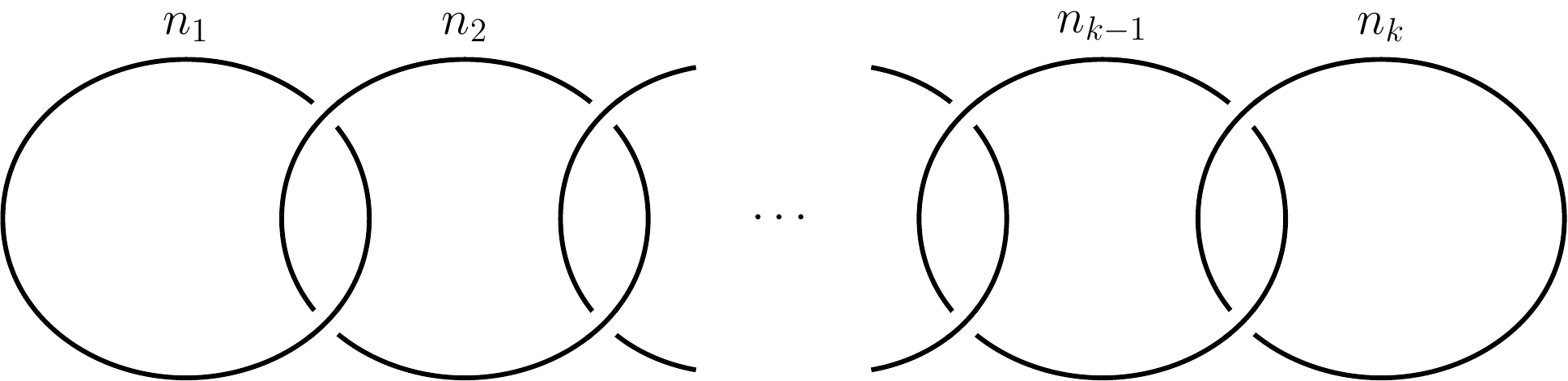}
\caption{A surgery description of $S^1 \times S^2$ corresponding to $\n$}\label{fig:s1s2}
\end{figure}

\noindent Moreover, we construct a 4-manifold $W_{p,q}(\n)$ by attaching $2$-handles to $S^1 \times D^3$ as in Figure~\ref{fig:wn}. 

\begin{figure}[h]
\includegraphics[width=.9\textwidth]{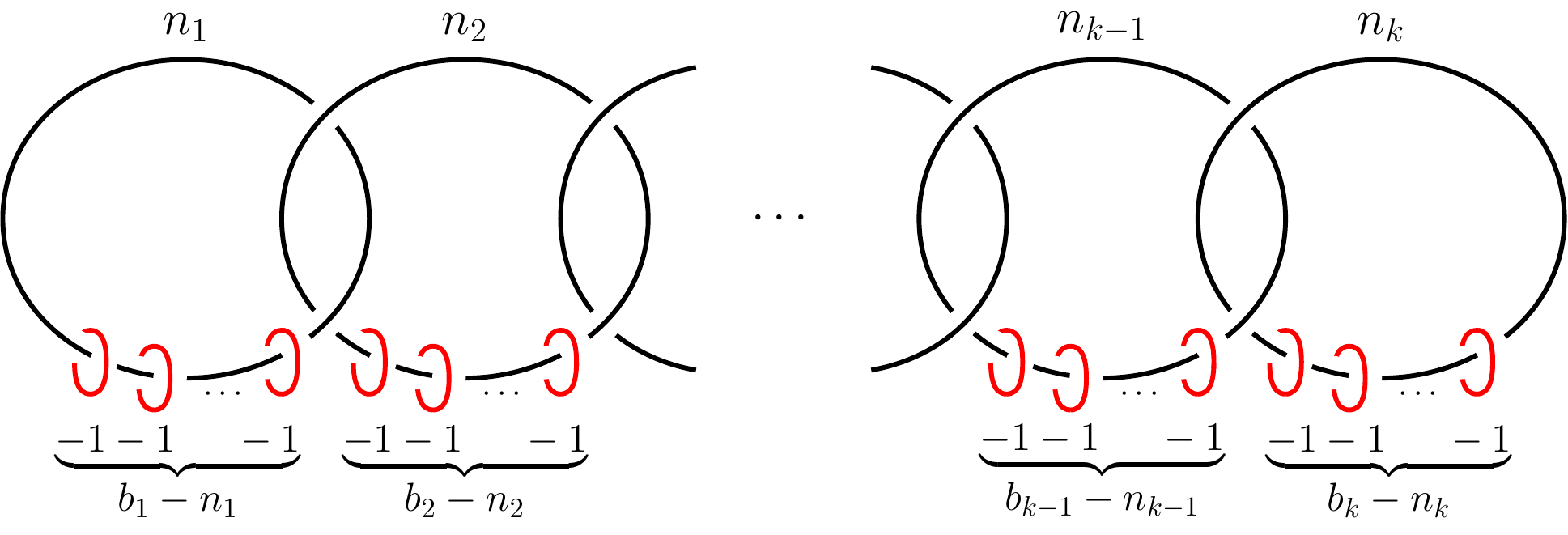}
\caption{The manifold $W_{p,q}(\n)$.}\label{fig:wn}
\end{figure}

\noindent By~\cite[Theorem 1]{Lisca:2008-1}, every minimal symplectic filling of a universally tight contact structures on $L(p,q)$ is diffeomorphic to $W_{p,q}(\n)$ for some $\n$ satisfying \eqref{eq:admissable}. It is straight forward to calculate some invariants of $W_{p,q}(\n)$. First note that by Lemma~\ref{lem:ULidentities}, we have
\[
\len(p/q)=U\left(\frac{p}{p-q}\right)+1 = \sum_{i=1}^k (b_i-2)+1.
\] 
Also, by construction of $W_{p,q}(\n)$ we have that
\begin{equation}\label{eq:b2_formula}
b_2(W_{p,q}(\n))=\sum_{i=1}^k (b_i-n_i)-1.
\end{equation}
Thus we see that
\begin{equation}\label{eq:b2_len_difference}
\len(p/q)- b_2(W_{p,q}(\n))= \sum_{i=1}^k (n_i-2)+2.
\end{equation}
Now we compute the fundamental group. Given an admissable tuble $\n=(n_1, \dots, n_k)$ satisfying \eqref{eq:admissable} and an integer $i$ with $1\leq i \leq k$, let
\[
\alpha_i/ \beta_i= [n_1, \dots, n_i]^-,
\]
where $\alpha_i$ and $\beta_i$ are positive relatively prime integers. Also, we set $\alpha_0 = 1$ and $\beta_0 = 0$.

\begin{lem}\label{lem:pi1_calc}
For each integer $i$ with $1\leq i \leq k$, let $d_i=\alpha_{i-1}$ if $b_i>n_i$ and $d_i=0$ otherwise. If $d=\gcd (d_1, \ldots, d_k)$, then
\[\pi_1(W_{p,q}(\n)) \cong \Z /d\Z.\]
\end{lem}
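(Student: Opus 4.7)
The plan is to apply Van Kampen's theorem to the handle decomposition of $W_{p,q}(\n)$ depicted in Figure~\ref{fig:wn}. Since $W_{p,q}(\n)$ is built from $S^1\times D^3$ by attaching 2-handles, and $\pi_1(S^1\times D^3)=\langle t\rangle\cong\Z$ is generated by the class of the core circle $S^1\times\{0\}$, each 2-handle imposes a single relation of the form $t^{e}=1$, where $e\in\Z$ is the algebraic winding number of the attaching circle around the 1-handle (equivalently, its intersection number with a cocore disk). Collecting the winding numbers of all attaching circles as $e_1,\dots,e_N$, Van Kampen gives
\[
\pi_1(W_{p,q}(\n))\cong\Z/\gcd(e_1,\dots,e_N)\Z,
\]
so the computation reduces to identifying these winding numbers from the picture. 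Note that $\pi_1$ is automatically cyclic, hence agrees with $H_1$, so equivalently one may work throughout with the cellular chain complex.

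The geometric claim to extract from Figure~\ref{fig:wn} is that the multiset of nonzero winding numbers is $\{\alpha_{i-1}\text{ with multiplicity }b_i-n_i:1\leq i\leq k,\,b_i>n_i\}$; in particular, the set of distinct nonzero winding numbers is exactly $\{\alpha_{i-1}:b_i>n_i\}=\{d_i:d_i\neq 0\}$. Granting this, the $\gcd$ collapses to $\gcd(d_1,\dots,d_k)=d$, and the desired identification $\pi_1(W_{p,q}(\n))\cong\Z/d\Z$ is immediate. I would verify the claim by induction on $k$, using the blow-down operation on admissable tuples from Section~\ref{sec:propcont}. When some $n_j=1$, blowing down $\n$ at $j$ produces an admissable tuple $\n'$ of length $k-1$ whose convergents $\alpha'_i$ relate to those of $\n$ via the continued fraction recursions in Lemma~\ref{lem:cont_frac_properties}; geometrically this corresponds to a handle slide or cancellation in the Kirby diagram. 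One checks that the winding-number description is preserved under such reductions, which brings the claim to a base case in which $n_j\geq 2$ for all $1<j<k$, where the winding numbers can be read off directly from the chain structure surrounding the 1-handle using the formula $\alpha_i=n_i\alpha_{i-1}-\alpha_{i-2}$.

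The main obstacle is the explicit geometric identification of the winding numbers in Lisca's diagram: one must carefully track how the chain of attaching circles encodes the continued fraction relation $[n_1,\dots,n_k]^-=0$ and how raising the framing of the $i$-th position from $-n_i$ to $-b_i$ contributes $b_i-n_i$ additional 2-handles whose attaching circles each wrap $\alpha_{i-1}$ times around the 1-handle. This is consistent with the Euler characteristic count: with $\sum_i(b_i-n_i)$ 2-handles attached, we get $\chi(W_{p,q}(\n))=\sum_i(b_i-n_i)$, which matches the $b_2$ formula \eqref{eq:b2_formula} precisely when $b_1=0$, i.e., when $d\neq 0$. Once the winding-number bookkeeping is complete, the conclusion $\pi_1(W_{p,q}(\n))\cong\Z/d\Z$ follows directly from Van Kampen.
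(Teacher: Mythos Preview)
Your overall framework is correct and matches the paper's: since $W_{p,q}(\n)$ is built from $S^1\times D^3$ by attaching 2-handles, $\pi_1$ is cyclic, so one may work in $H_1$, and the problem reduces to showing that the attaching circle of each extra 2-handle at position $i$ represents $\alpha_{i-1}$ times the generator. The paper proves exactly this claim, but much more directly than your proposed blow-down induction. It simply observes that in $H_1(S^1\times S^2)$ the meridians $\mu_i$ of the chain satisfy the surgery relations $-\mu_{i-1}+n_i\mu_i-\mu_{i+1}=0$; writing $\mu_i=\gamma_i\mu_1$, the $\gamma_i$ obey the same recursion as the numerators $\alpha_{i-1}$ with the same initial values, so $\gamma_i=\alpha_{i-1}$. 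Since the extra 2-handles are attached along these meridians, the relations are $\alpha_{i-1}\mu_1=0$ for each $i$ with $b_i>n_i$, and you are done. This is precisely the computation you relegate to your ``base case'', but it is in fact the entire argument---no induction is needed.

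Your blow-down induction, by contrast, is not obviously well-posed: blowing down $\n$ at an index $j$ with $n_j=1$ changes the admissable tuple, but since $b_j\geq 2>n_j$, there are already 2-handles attached at position $j$, and it is unclear what 4-dimensional operation on $W_{p,q}(\n)$ this is meant to correspond to (it is \emph{not} a Kirby blow-down of a $(-1)$-sphere, nor an obvious handle cancellation). You would have to carefully track both $\n$ and the $b_i$'s simultaneously, which adds bookkeeping with no payoff. Drop the induction and just run the recursion directly as above.
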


\begin{proof}
Since $W_{p,q}(\n)$ is obtained by attaching 2-handles to a single 1-handle, it's fundamental group is cyclic and hence abelian. Thus it suffices to compute $H_1(W_{p,q}(\n))$. Let $\mu_i$ denote the homology class of the meridian of the $i$th unknot component of the link in Figure~\ref{fig:s1s2}. Notice that these satisfy the relation
\[
-\mu_{i-1}+n_i \mu_i - \mu_{i+1} =0.
\]
for $1\leq i \leq k$ (with the convention that $\mu_0=\mu_{k+1}=0$). Thus notice that  if $\mu_i=\gamma_i \mu_1$ and $3 \leq i \leq k$, then these satisfy the recursion relation
\[
\gamma_i= n_{i-1} \gamma_{i-1} - \gamma_{i-2} 
\]
where $\gamma_1=1$ and $\gamma_2=n_1$. However we see that the sequence $\alpha_{i-1}$ satisfies the same recursion relation by \eqref{eq:define_convergents} in Lemma~\ref{lem:cont_frac_properties} and we have $\alpha_{0}=1$ and $\alpha_{1}=n_1$. Hence we conclude that $\gamma_i=\alpha_{i-1}$ for each $1\leq i \leq k$. In particular, we have that $\gamma_k=\alpha_{k-1}=1$ since $[n_1, \dots, n_k]^-=0$. Thus we conclude that $H_1(W_{p,q}(\n))$ is generated by $\mu_1=\mu_k$ with a relation $\alpha_{i-1} \mu_1=0$ for each 2-handle attached along $\mu_i$ (i.e.\ when $b_i < n_i$).
\end{proof}

We will later make use of the following consequence.

\begin{cor}\label{cor:divide}
 If $b_i>n_i$ for some $1<i<k$, then $|\pi_1(W_{p,q}(\n))|$ divides
\[
\det(M(n_1, \dots, n_{i-1}))=\det(M(n_{i+1}, \dots, n_{k})).
\]
\end{cor}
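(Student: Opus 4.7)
The plan is to combine Lemma~\ref{lem:pi1_calc} with Lemmas~\ref{lem:cont_frac_matrices} and~\ref{lem:admissable_facts}, since all the ingredients are already in place. By Lemma~\ref{lem:pi1_calc}, the order $|\pi_1(W_{p,q}(\n))|$ equals $d=\gcd(d_1,\dots,d_k)$, where $d_j=\alpha_{j-1}$ whenever $b_j>n_j$. Since the hypothesis assumes $b_i>n_i$ for some $1<i<k$, the value $\alpha_{i-1}$ appears as one of the $d_j$'s, so $d$ divides $\alpha_{i-1}$.

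Next, I would identify $\alpha_{i-1}$ with both of the determinants appearing in the statement. By Lemma~\ref{lem:cont_frac_matrices} applied to $[n_1,\dots,n_{i-1}]^-=\alpha_{i-1}/\beta_{i-1}$, we immediately get
\[
\det M(n_1,\dots,n_{i-1})=\alpha_{i-1}.
\]
For the other equality, since $\n$ is admissable with $[n_1,\dots,n_k]^-=0$, Lemma~\ref{lem:admissable_facts}\,\eqref{admissable2} gives
\[
\det M(n_1,\dots,n_{i-1})=\det M(n_{i+1},\dots,n_k),
\]
so both determinants equal $\alpha_{i-1}$, and the divisibility $|\pi_1(W_{p,q}(\n))|\mid \alpha_{i-1}$ is precisely what is claimed.

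There is essentially no obstacle: the corollary is a direct bookkeeping consequence of the preceding lemmas, with the only subtlety being the harmless observation that the hypothesis $1<i<k$ is exactly what is needed to apply Lemma~\ref{lem:admissable_facts}\,\eqref{admissable2} (which requires a strictly interior index) and to ensure that $d_i=\alpha_{i-1}$ is well-defined via the convention $\alpha_0=1$.
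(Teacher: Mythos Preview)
Your proof is correct and follows essentially the same approach as the paper: both combine Lemma~\ref{lem:pi1_calc} (to get that $d$ divides $\alpha_{i-1}$) with Lemma~\ref{lem:cont_frac_matrices} and Lemma~\ref{lem:admissable_facts}\,\eqref{admissable2} (to identify $\alpha_{i-1}$ with the two determinants). The paper's version is simply more terse.
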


\begin{proof} By Lemma~\ref{lem:cont_frac_matrices} and Lemma~\ref{lem:admissable_facts} \eqref{admissable2}, we have \[
\alpha_{i-1}=\det(M(n_1, \dots, n_{i-1}))=\det(M(n_{i+1}, \dots, n_{k})).
\] The proof is complete by applying Lemma~\ref{lem:pi1_calc}.
\end{proof}

\begin{lem}\label{lem:extremal_example}
Suppose $W_{p,q}(\n)$ is a non-simply connected symplectic filling of $L(p,q)$ where there is a unique $j$ such that $b_j>n_j$.
Then $j>1$ and there are coprime integers $c,d$ with $0< c < d$ and  $d/c=[n_1, \dots, n_{j-1}]^-$, such that
\begin{enumerate}[font=\upshape]
\item $L(p,q)$ is diffeomorphic to $L(nd^2, ncd-1)$,
\item $|\pi_1(W_{p,q}(\n))| = d$, and 
\item $\len(p/q)-b_2(W_{p,q}(\n)) = V(d/c)$,
\end{enumerate}
where $b_2(W_{p,q}(\n))= n-1$.\end{lem}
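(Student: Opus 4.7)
The plan is to exploit the hypothesis of a unique $j$ with $b_j>n_j$ to split $\n$ into two halves $(n_1,\dots,n_{j-1})$ and $(n_{j+1},\dots,n_k)$ that both consist of integers $\geq 2$, and then derive all four conclusions by direct computation with these halves, Lemma~\ref{lem:pi1_calc}, Lemma~\ref{lem:det_multilinear}, and Lemma~\ref{lem:ULidentities}. The first step is to identify $j$. Since $b_i=n_i$ for $i\neq j$ and each $b_i\geq 2$, we have $n_i\geq 2$ for every $i\neq j$, so Lemma~\ref{lem:pi1_calc} (with $d_i=0$ for $i\neq j$ and $d_j=\alpha_{j-1}$) gives $|\pi_1(W_{p,q}(\n))|=\alpha_{j-1}$. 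Non-triviality immediately rules out $j=1$. An easy induction shows $[c_1,\dots,c_r]^->1$ whenever $c_i\geq 2$, so the condition $[n_1,\dots,n_k]^-=0$ forces some $n_i<2$; admissibility then makes this entry $n_j=1$. The case $j=k$ is excluded by combining $\alpha_k=n_k\alpha_{k-1}-\alpha_{k-2}=0$ with the strict monotonicity of $\alpha_i$ on a tuple with every entry $\geq 2$. Setting $d=\alpha_{j-1}$ and $c=\beta_{j-1}$ then presents $d/c=[n_1,\dots,n_{j-1}]^-$ as a standard Hirzebruch--Jung expansion with $0<c<d$ and $\gcd(c,d)=1$.

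For the lens space identification, I would compute $p$ and $p-q$ by multilinearity in the $j$th entry. Writing $n=b_j-n_j$ and applying Lemma~\ref{lem:det_multilinear},
\[
p=\det M(b_1,\dots,b_k)=\det M(\n)+n\det M(n_1,\dots,n_{j-1})\det M(n_{j+1},\dots,n_k).
\]
Now $\det M(\n)=0$ because $M(\n)$ is admissible with value $0$, and by Lemmas~\ref{lem:cont_frac_matrices} and~\ref{lem:admissable_facts}\eqref{admissable2} each of the remaining two determinants equals $d$, so $p=nd^2$. The same argument on $(b_2,\dots,b_k)$, together with $\det M(n_2,\dots,n_k)=1$ (from $[n_2,\dots,n_k]^-=1/n_1$ via Lemma~\ref{lem:admissable_facts}\eqref{admissable1}), gives $p-q=1+ncd$; the boundary case $j=2$ needs direct first-row expansion in place of Lemma~\ref{lem:det_multilinear}, but the formula is the same. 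Then
\[
q(ncd-1)\equiv -(ncd+1)(ncd-1)=1-n^2c^2d^2\equiv 1\pmod{nd^2},
\]
so $L(p,q)\cong L(nd^2,ncd-1)$, and $b_2(W_{p,q}(\n))=n-1$ is immediate from \eqref{eq:b2_formula}.

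For the length identity, \eqref{eq:b2_len_difference} reduces the claim to $\sum_{i=1}^k(n_i-2)+2=V(d/c)$. Both halves consist of integers $\geq 2$, so each is a standard Hirzebruch--Jung expansion: $d/c=[n_1,\dots,n_{j-1}]^-$ has length $j-1$, and (using $n_j=1$ with Lemma~\ref{lem:admissable_facts}\eqref{admissable1}) $d/(d-\alpha_{j-2})=[n_{j+1},\dots,n_k]^-$ has length $k-j$. Lemma~\ref{lem:cont_frac_properties}\eqref{it:reverse_order} yields $d/\alpha_{j-2}=[n_{j-1},\dots,n_1]^-$, hence $U(d/\alpha_{j-2})=U(d/c)$. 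Applying Lemma~\ref{lem:ULidentities}(2) once to $d/\alpha_{j-2}$ and once to $d/(d-\alpha_{j-2})$ converts each half's length into the other half's $U$-value, giving $U(d/(d-\alpha_{j-2}))=j-2$ and $U(d/c)=U(d/\alpha_{j-2})=k-j-1$. Summing, $\sum_i(n_i-2)=(k-j-1)+(1-2)+(j-2)=k-4$, so $\sum_i(n_i-2)+2=k-2=U(d/c)+\len(d/c)=V(d/c)$. I expect this last identity to be the main obstacle: the earlier computations are routine multilinearity, whereas here one needs to notice that both halves of $\n$ are simultaneously related by the CF-reversal identity and by Lemma~\ref{lem:ULidentities}(2), and that this coupling forces the $(j,k)$-arithmetic to cancel exactly.
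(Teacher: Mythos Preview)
Your proof is correct and follows the same overall strategy as the paper: identify the unique $n_j=1$, split $\n$ at $j$, read off $|\pi_1|=d$ from Lemma~\ref{lem:pi1_calc}, and compute the remaining invariants from the two halves. Two minor differences are worth noting. For the lens space identification, the paper simply declares that computing $p/(p-q)=nd^2/(ndc+1)$ is ``an exercise in continued fractions'', whereas you carry this out explicitly via Lemma~\ref{lem:det_multilinear} applied to $M(b_1,\dots,b_k)$ and $M(b_2,\dots,b_k)$; your route is more transparent. For the length identity, the paper uses Lemma~\ref{lem:admissable_facts}\eqref{admissable3} to obtain $[n_k,\dots,n_{j+1}]^-=d/(d-c)$ directly (the hypothesis $0<c<d$ forces $m=1$ there) and then invokes Lemma~\ref{lem:ULidentities}\eqref{eq:UUL} in a single step; you instead work with $[n_{j+1},\dots,n_k]^-=d/(d-\alpha_{j-2})$, use the reversal $U(d/\alpha_{j-2})=U(d/c)$, and apply part~(2) of Lemma~\ref{lem:ULidentities} twice. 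Both arrive at $V(d/c)$, but the paper's path is slightly shorter.
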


\begin{proof}
Since we are assuming that $W_{p,q}(\n)$ is not simply connected, $\n$ takes the form $[n_1, \dots, n_k]^-=0$, where $k>1$. The tuple $\n$ must have some $n_j=1$. Since $b_j\geq 2$, this implies that this must be the unique index with $n_j=1$. Moreover thus, $\n$ takes the form
\[
\n=(n_1, \dots, n_{j-1}, 1, n_{j+1} , \dots, n_{k}). 
\]
Moreover, as $W_{p,q}(\n)$ is not simply connected, we have that $1<j<k$. Also, the assumption $b_2(W_{p,q}(\n))=n-1$ and equation \eqref{eq:b2_formula} implies that
\[
\frac{p}{p-q}=[n_1, \dots, n_{j-1}, 1+n, n_{j+1} , \dots, n_{k}]^-.
\]
Since $n_1, \dots, n_{j-1}\geq 2$, then there are relatively prime integers $c,d$ such that $d>c>0$ and  
\[
\frac{d}{c}=[n_1, \dots, n_{j-1}]^-
\]
By Lemma~\ref{lem:pi1_calc} we have $\pi_1 (W_{p,q}(\n)) = \Z/d\Z$.
Lemma~\ref{lem:admissable_facts} \eqref{admissable3} combined with the fact that $d>c>0$ shows that
\[
\frac{d}{d-c}=[n_{k} , \dots, n_{i+1}]^-.
\]
Thus by equation~\eqref{eq:b2_len_difference} and Lemma~\ref{lem:ULidentities} \eqref{eq:UUL}, we see that
\[
\len(p/q)- b_2(W_{p,q}(\n))= 2 + \sum_{i=1}^k (n_i-2)=1+U\left(\frac{d}{c}\right)+U\left(\frac{d}{d-c}\right)=V\left(\frac{d}{c}\right).
\]
It is an exercise in continued fractions to calculate $p/(p-q)$ as 
\[
\frac{p}{p-q}= \frac{nd^2}{ndc+1}
\]
Thus, $p/q$ takes the form:
 \[\frac{p}{q}= \frac{nd^2}{nd^2- ndc -1}.\]
 However since $(nd^2+ ndc -1)(ndc-1)\equiv 1 \bmod nd^2$, we see that $L(p,q)$ is homeomorphic to $L(nd^2, ndc-1)$, as required. 
\end{proof}

\begin{lem}\label{lem:univ_tight}
Let $X$ be a minimal symplectic filling of $(L(nd^2, q), \xi)$ where $q\equiv -1 \bmod nd$ and $|\pi_1 (X) |=d$. Then $\xi$ is universally tight.\end{lem}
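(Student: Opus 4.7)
The plan is a covering space argument that reduces the problem to counting tight contact structures on an $A$-type lens space.

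Let $\pi\colon \tilde{X}\to X$ denote the universal cover, a $d$-fold covering. The first step is to identify $\partial\tilde{X}$ as a specific lens space. By the Etnyre--Roy and Christian--Li classifications, $X$ is diffeomorphic to a Lisca filling $W_{nd^2,q}(\mathbf n)$, and by Lemma~\ref{lem:pi1_calc} the group $\pi_1(X)\cong\mathbb{Z}/d\mathbb{Z}$ is cyclic. Since $W_{nd^2,q}(\mathbf n)$ admits a handle decomposition with one $0$-handle, one $1$-handle, and several $2$-handles, we have $H_3(X)=0$ and $H_2(X)$ free, so Poincar\'e--Lefschetz duality gives $H_1(X,\partial X)\cong H^3(X)=0$. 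The long exact sequence of the pair then shows that $H_1(\partial X)\to H_1(X)$ is surjective, and since both fundamental groups are abelian, so is the induced map on $\pi_1$. Consequently $\partial\tilde{X}$ is connected and is the $d$-fold cyclic cover of $L(nd^2,q)$; a direct calculation identifies it with $L(nd, q\bmod nd)=L(nd,nd-1)$, using the assumption $q\equiv -1\bmod nd$.

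Pulling back the symplectic structure on $X$ turns $\tilde{X}$ into a symplectic filling of $(L(nd,nd-1),\tilde\xi)$, where $\tilde\xi=\pi^{*}\xi$; in particular $\tilde\xi$ is tight. Since $nd/(nd-1)=[2,\ldots,2]^{-}$ has $nd-1$ twos, Honda's classification yields
\[
\#\{\text{tight contact structures on } L(nd,nd-1)\}\;=\;\prod_{i=1}^{nd-1}(2-1)\;=\;1,
\]
and this unique structure must therefore be universally tight. Hence $\tilde\xi$ is universally tight on $L(nd,nd-1)$. Pulling back one further step to the universal cover $S^3$ of $L(nd,nd-1)$, which is also the universal cover of $L(nd^2,q)$, produces a tight contact structure on $S^3$, proving that $\xi$ itself is universally tight.

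The main obstacle is the identification of $\partial\tilde{X}$ with the specific lens space $L(nd,nd-1)$: this requires the handle-theoretic input to ensure surjectivity of $\pi_1(\partial X)\to\pi_1(X)$ so that $\partial\tilde{X}$ is connected, together with the arithmetic computation of the $d$-fold cyclic cover enabled by the hypothesis $q\equiv -1\bmod nd$. Once the cover is identified, the conclusion follows immediately from the uniqueness of tight contact structures on the $A$-type lens space.
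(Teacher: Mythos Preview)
Your proof is correct and follows essentially the same covering-space argument as the paper: pass to the universal cover, identify the boundary as $L(nd,nd-1)$, and invoke Honda's classification to conclude that the pulled-back contact structure is universally tight. The only cosmetic difference is that the paper first deforms $X$ to a Stein filling (via Wendl--Niederkr\"uger) before citing Gromov for tightness of $\tilde\xi$, whereas you appeal directly to symplectic fillability; your explicit justification of the surjectivity $\pi_1(\partial X)\to\pi_1(X)$ via the handle structure and Poincar\'e--Lefschetz duality is a welcome elaboration of a step the paper leaves implicit.
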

\begin{proof}
By deforming the symplectic structure on $X$, we can assume that it is a Stein filling of $(L(p,q),\xi)$ \cite{Wendl-Niederkruger}. We have a surjection $\pi_1 (L(p,q)) \rightarrow \pi_1 (X)$ induced by inclusion. Thus taking the universal cover of $X$ yields a Stein filling $\widetilde{X}$ of the lens space $(L(p',q'), \widetilde{\xi})$, where $L(p',q')\cong L(nd, q') \cong L(nd, nd-1)$.  Since $\widetilde{X}$ is Stein, the contact structure $\widetilde{\xi}$ is tight \cite{Gromov1985}. The classification of tight contact structures on lens space shows that every tight contact structure on $L(nd, nd-1)$ is universally tight \cite{Honda2000}. Thus we have universally tight $\widetilde{\xi}$ covering $\xi$. This implies that $\xi$ is itself universally tight.\end{proof}

\subsection{Proof of Theorem~\ref{thm:main} and Proposition~\ref{prop:main_optimality}}
\begin{proof}[Proof of Theorem~\ref{thm:main} and Proposition~\ref{prop:main_optimality}]
By \cite[Theorem 1.1]{Etnyre-Roy:2020-1} and \cite[Theorem 1.4]{Christian-Li:2020-1} any minimal symplectic filling of a lens space is diffeomorphic to one constructed by Lisca~\cite{Lisca:2008-1} as described at the begining of Section~\ref{sec:symplectic}. So suppose that $X\cong W_{p,q}(\n)$ is a filling of $L(p,q)$ corresponding to an admissable tuple
\[
\n = (n_1, \dots, n_k),
\]
satisfying \eqref{eq:admissable}. 

Note that if $X$ simply connected, the theorem is equivalent to the assertion that $\len(p/q) - b_2(X)\geq 0$. This was established Fossati~\cite[Theorem 1]{Fossati:2019-1}. Alternatively the enthusiastic reader can deduce it from \eqref{eq:b2_len_difference} and properties of admissable tuples\footnote{\textbf{Hint:} The admissable tuple $\n$ can be reduced to the admissable tuple $(0)$ by a sequence of blow-downs.}. 

In any case, we may assume that $X$ is not simply connected. In particular, by Lemma~\ref{lem:pi1_calc} we can assume that $n_1,n_k\geq 2$. Now the admissable tuple must have $n_j=1$ for some $1<j<k$. By picking $j$ minimal with this property we can assume that the admissible tuple takes the form
\[
\n=(n_1, \dots, n_{j-1}, 1, n_{j+1}, \dots, n_k),
\]
where we can assume that $n_1,\dots, n_{j-1}\geq 2$. 
Suppose that
\[
[n_{1}, \dots, n_{j-1}]^-=\frac{\alpha}{\beta}>1,
\]
where $\alpha$ and $\beta$ are positive relatively prime integers. Then by Lemma~\ref{lem:admissable_facts}~\eqref{admissable3}, we have that
\[
[n_{k}, \dots, n_{j+1}]^-=\frac{\alpha}{m\alpha-\beta}.
\]
for some integer $m\geq 1$. Since $\alpha>\beta$, Lemma~\ref{lem:U_bound} implies that
\begin{equation}\label{eq:proof_main_sum}
\sum_{i=j+1}^k (n_i-2) \geq  U\left(\frac{\alpha}{\alpha-\beta}\right)
\end{equation}
Also, Corollary~\ref{cor:divide} implies that 
\begin{equation}\label{eq:pi1leqalpha}
|\pi_1(X)|\leq \alpha.
\end{equation}
Moreover, by equation~\eqref{eq:b2_len_difference}, Lemma~\ref{lem:ULidentities} \eqref{eq:UUL}, and equation~\eqref{eq:proof_main_sum}, we have
\begin{equation}\label{eq:proof_main1}
\len(p/q)-b_2(X)\geq U\left(\frac{\alpha}{\beta}\right)+U\left(\frac{\alpha}{\alpha-\beta}\right)+1 = V\left(\frac{\alpha}{\beta}\right).
\end{equation}
Thus if we write $L=V(\alpha/\beta)$, then  equation~\eqref{eq:proof_main1} along with equation~\eqref{eq:pi1leqalpha} and Lemma~\ref{lem:fibonacci} show that
\[
b_2(X)\leq \len(p/q)- L \quad\text{and}\quad | \pi_1(X)|\leq F_{L+2}.
\]
Thus if $| \pi_1(X)|\geq F_{\ell+2}$ for some integer $\ell \geq 0$, we have that $L\geq \ell$ and the bound \eqref{eq:b2_lengthbound} follows.
 
Next we deduce Proposition~\ref{prop:main_optimality}. We continue to work with $X$ non-simply connected and the same notation as before. Suppose that we have
\[
|\pi_1(X)|=F_{\ell+2} \quad\text{and}\quad b_2(X)= \len(p/q)- \ell  
\]
for some $\ell\geq 1$. This implies that $|\pi_1(X)|=\alpha= F_{\ell+2}$ and $\ell=V(\alpha/\beta)$. By Lemma~\ref{lem:fibonacci} this implies that $\alpha/\beta	=F_{L+2}/F_L$ or $\alpha/\beta= F_{L+2}/F_{L+1}$. Lemma~\ref{lem:U_bound} implies that if
\[
\len(p/q)-b_2(X) =2+\sum_{i=1}^k (n_i-2) = V(\alpha/\beta)
\]
then $n_{j+1}, \dots, n_k\geq 2$. Thus $\n$ has the unique index $j$ with $n_j=1$. Moreover, since $n_1, \dots, n_{j-1}\geq 2$, the sequence of denominators $\alpha_i$ is increasing for $i<j$ and so for  $i<j$ we have $\alpha_{i-1}<\alpha_{j-1}$. Thus if $b_i>n_i$ for some $i< j$, we would have $|\pi_1 (X)|<\alpha$. By reversing the  order of the admissable tuple we can show similarly that if $i>j$ satisfies $b_i>n_i$ then we would have $|\pi_1 (X)|<\alpha$. Thus we see that $j$ is the unique index with $b_j>n_j$. Thus it follows from Lemma~\ref{lem:extremal_example} and Lemma~\ref{lem:univ_tight} that $X$ is a filling of $L(n F_{\ell +2}^2, n F_{\ell+2} F_\ell -1)$ or $L(n F_{\ell +2}^2, n F_{\ell+2} F_{\ell+1} -1)$ with a universally tight contact structure. However since $F_{\ell+2} =F_\ell + F_{\ell +1}$ these two lens spaces are diffeomorphic. This proves the only if statement. Conversely, guided by Lemma~\ref{lem:extremal_example}, it is easy to produce an admissable tuple $\n$ for which $W_{p,q}(\n)$ has all the necessary properties.\end{proof}

\subsection{Proof of Theorem~\ref{thm:main2} and Proposition~\ref{prop:main2_optimality}}
\begin{proof}[Proof of Theorem~\ref{thm:main2} and Proposition~\ref{prop:main2_optimality}]
First we verify Theorem~\ref{thm:main2} and Proposition~\ref{prop:main2_optimality} in the case that $d=1$. Let $X$ be a minimal simply connected filling of $(L(p,q), \xi)$. Since $b_2(X)\leq \len(p/q)\leq q \leq p-1$, the bound \eqref{eq:b2-bound} is satisfied in this case. Moreover, we have equality implies that $q=p-1$. Lemma~\ref{lem:univ_tight} shows that if $q=p-1$, then $\xi$ must be universally tight. This proves the moreover statement in Theorem~\ref{thm:main2} and establishes the only if direction of Proposition~\ref{prop:main2_optimality}. To complete the proof of Proposition~\ref{prop:main2_optimality}, note that a standard plumbing fills $L(p,p-1)$ and has all the necessary properties.

Thus from now now on we can assume that $X$ is a non-simply connected minimal symplectic filling of $(L(p,q),\xi)$. Then \cite[Theorem 1.1]{Etnyre-Roy:2020-1} and \cite[Theorem 1.4]{Christian-Li:2020-1} implies $X$ is diffeomorphic to some $W_{p,q}(\n)$ as constructed by Lisca~\cite{Lisca:2008-1}. So suppose that $X\cong W_{p,q}(\n)$ is a filling of $L(p,q)$ corresponding to an admissable tuple
\[
\n = (n_1, \dots, n_k),
\]
satisfying \eqref{eq:admissable}, and
\[\frac{p}{p-q}=[b_1, \dots, b_k]^-.\] 
By Lemma~\ref{lem:cont_frac_matrices} We have that
\[
p=\det M(b_1, \dots, b_k).
\]
Let $S\subseteq \{1,\dots, k\}$ be the set of $i$ with $b_i>n_i$. As we are assuming that $X$ is not simply-connected and Lemma~\ref{lem:pi1_calc} shows that $1$ and $k$ are not in $S$. By using multilinearity of the determinant as in Lemma~\ref{lem:det_multilinear} and inducting on the size of the set $S$, we see that $p$ can be expressed here as 
\begin{align*}
p&= \sum_{i\in S} (b_i-n_i) \det M(n_1, \dots, n_{i-1}) \det M(n_{i+1}, \dots, n_{k})\\
&+ \sum_{\substack{i,j\in S \\ i<j}} K_{i,j} \det M(n_1, \dots, n_{i-1}) \det M(n_{j+1}, \dots, n_{k}),
\end{align*}
where $K_{i,j}$ is the integer
\[
K_{i,j}= (b_i-n_i)(b_j-n_j) \det M(b_{i+1}, \dots, b_{j-1}).
\]
 In any case we see that for any $i\in S$ we have that $d$ divides both 
$\det M(n_{1}\dots , n_{i-1})$ and $\det M(n_{i+1}\dots , n_{k})$. So the above expression implies that $d^2$ divides $p$. Morever the property that the tuple is admissable implies that  the coefficients satisfy $K_{i,j}\geq 1$. Thus by ignoring terms that correspond to subsets of $S$ with more than one element we obtain the bound
\begin{equation}\label{eq:d2_bound}
p\geq d^2\sum_{i\in S} (b_i-n_i)= d^2(b_2(X)-1),
\end{equation}
with equality only if $S$ contains a single element. Rearranging the above inequality yields \eqref{eq:b2-bound}.

Now suppose that $X$ is a filling with $b_2(X)=p/d^2-1$ and $|\pi_1(X)|=d$. Since equality in \eqref{eq:d2_bound} implies that the admissible tuple $\n$ has a unique index $j$ with $b_j>n_j$, we see that $X$ is a filling of the form studied in Lemma~\ref{lem:extremal_example}. This shows that $L(p,q)$ is diffeomorphic to $L(nd^2, ndc-1)$, where $n=b_2(X)+1$. Moreover, Lemma~\ref{lem:univ_tight} shows that the contact structure $\xi$ is universally tight. This shows that the filling is in the form required by Proposition~\ref{prop:main2_optimality} and establishes the moreover statement in Theorem~\ref{thm:main2}.

Guided by Lemma~\ref{lem:extremal_example}, one can easily find admissable tuples necessary to construct fillings completing the proof of Proposition~\ref{prop:main2_optimality}. 
\end{proof}

\begin{rem}\label{rem:uniqueness}

Lemma~\ref{lem:extremal_example} and the proof of Proposition~\ref{prop:main2_optimality} imply that $L(p,q)$ has a unique minimal symplectic filling $X$ with with $|\pi_1(X)|= d$ and $b_2(X) =\frac{p}{d^2} -1$. It is established in the proof Proposition~\ref{prop:main2_optimality} that this $X$ is diffeomorphic to $W_{p,q}(\n)$, where there is exactly one index $j$ such that $b_j>n_j$. It follows from Lemma~\ref{lem:extremal_example} the admissable tuple $\n$ is determined by the fraction $p/q$ and the integer $d$. Hence, by \cite[Theorem~1.1]{Lisca:2008-1} we conclude that the filling $X$ is unique up to diffeomorphism.
In particular, since $X$ is unique it must coincide with the filling obtained as the Milnor fiber of a quotient of a Gorenstein smoothing (cf. \cite[Proposition 5.9]{Looijenga-Wahl1986}).
\end{rem}

\bibliographystyle{alpha}
\def\MR#1{}
\bibliography{bib}
\end{document}